\title{A weighted Minkowski theorem for pseudo-cones}
\author{Rolf Schneider}
\date{}
\newcommand{\Sn}{{\mathbb S}^{n-1}}
\newcommand{\R}{{\mathbb R}}
\newcommand{\C}{{\mathcal C}}
\newcommand{\K}{{\mathcal K}}
\newcommand{\N}{{\mathbb N}}
\newcommand{\Ha}{\mathcal{H}}
\newcommand{\B}{\mathcal{B}}
\newcommand{\D}{{\rm d}}
\newtheorem{theorem}{Theorem}
\newtheorem{lemma}{Lemma}
\newtheorem{definition}{Definition}
\begin{document}
\maketitle

\begin{abstract}
A nonempty closed convex set in $\R^n$, not containing the origin, is called a pseudo-cone if with every $x$ it also contains $\lambda x$ for $x\ge 1$. We consider pseudo-cones with a given recession cone $C$, called $C$-pseudo-cones. The family of $C$-pseudo-cones can, with reasonable justification, be considered as a counterpart to the family of convex bodies containing the origin in the interior. For a $C$-pseudo-cone one can naturally define a surface area measure and a covolume. Since they are in general infinite, we introduce a weighting, leading to modified versions of surface area and covolume. These are finite and still homogeneous, though of other degrees. Our main result is a Minkowski type existence theorem for $C$-pseudo-cones with given weighted surface area measure. \\[2mm]
{\em Keywords: pseudo-cone, recession cone, surface area measure, covolume, weighting, Min\-kow\-ski's existence theorem}  \\[1mm]
2020 Mathematics Subject Classification: Primary 52A20, Secondary 52A38
\end{abstract}

\section{Introduction}\label{sec1}

A pseudo-cone in $\R^n$ is a nonempty closed convex set $K$ with the property that $o\notin K$ (where $o$ denotes the origin of $\R^n$) and that $\lambda K\subseteq K$ for all $\lambda\ge 1$. Like convex bodies containing the origin in the interior, pseudo-cones admit a kind of polarity, now called copolarity, which is defined by
$$ K^*:= \{x\in\R^n:\langle x,y\rangle\le -1\mbox{ for all }y\in K\},$$
where $\langle\cdot\,,\cdot\rangle$ denotes the scalar product of $\R^n$. If $K$ is a pseudo-cone, then its copolar set $K^*$ is again a pseudo-cone, and $K^{**}=K$. Copolarity (though not under this name) was pointed out in \cite{ASW22} as a special example within a systematic study of dualities and was investigated in \cite{XLL22}, and pseudo-cones were treated in \cite{Ras17, Sch23}.

If $K$ is a pseudo-cone and ${\rm rec}\,K$ is its recession cone, then $K\subset{\rm rec}\,K$, and ${\rm rec}\,K^*= ({\rm rec}\, K)^\circ$, where $C^\circ$ denotes the 
 dual cone of a convex cone $C$. In the following, we will deal with pseudo-cones with given recession cone $C$, which we call $C$-pseudo-cones. Of the cone $C$, we always assume that it is pointed and $n$-dimensional. If $C$ is given, the subsets 
$$ \Omega_C:= \Sn\cap {\rm int}\,C,\qquad \Omega_{C^\circ}:= \Sn\cap{\rm int}\,C^\circ$$
of the unit sphere $\Sn$ (where ${\rm int}$ denotes the interior) will play an important role. 

With their copolarity, pseudo-cones are an interesting counterpart to convex bodies containing the origin in the interior. Many notions familiar for convex bodies can be carried over to pseudo-cones. We mention first the surface area measure. Let $K$ be a $C$-pseudo-cone. Its surface area measure $S_{n-1}(K,\cdot)$ is the image measure of the $(n-1)$-dimensional Hausdorff measure, restricted to $\partial K\cap{\rm int}\,C$, under a suitably defined spherical image map. Here $\partial K$ denotes the boundary of $K$, and the spherical image map is only defined at boundary points of $K$ where there exists a unique outer unit normal vector, and this is in $\Omega_{C^\circ}$. The surface area measure $S_{n-1}(K,\cdot)$ is a Borel measure, but contrary to the case of convex bodies, it is only defined on $\Omega_{C^\circ}$, and it may be infinite. The latter fact poses new problems, in particular if one asks for an analogue of Minkowski's existence and uniqueness theorem.

For convex bodies, Minkowski's problem asks for necessary and sufficient conditions on a Borel measure on the unit sphere to be the surface area measure of a convex body, and for the degree of uniqueness of such a body. The answers are well known. However, necessary and sufficient conditions on a Borel measure on $\Omega_{C^\circ}$ to be the surface area measure of a $C$-pseudo-cone are not known. It was shown in \cite{Sch21} that every nonzero finite Borel measure on $\Omega_{C^\circ}$ is the surface area measure of a $C$-pseudo-cone. This pseudo-cone $K$ has the additional property of being $C$-close, which means that $C\setminus K$ has finite volume (and among $C$-close pseudo-cones, $K$ is unique). For finite measures, Minkowski type theorems for pseudo-cones were also investigated in the $L_p$ Brunn--Minkowski theory, by Yang, Ye and Zhu \cite{YYZ23}, and for dual curvature measures by Li, Ye and Zhu \cite{LYZ23}. Much less progress has been made for infinite measures. In \cite{Sch23}, a sufficient condition on a not necessarily finite Borel measure on $\Omega_{C^\circ}$ to be the surface area measure of a $C$-close pseudo-cone was found. It remained open whether this condition is also necessary. For a necessary condition satisfied by surface area measures of general $C$-pseudo-cones, we refer to \cite[Sect. 4]{Sch21}.

Unlike convex bodies, pseudo-cones have the property that far away from $o$ their shape is rather restricted: if the distance from $o$ tends to infinity, a $C$-pseudo-cone looks more and more like its recession cone $C$. Thus, close to the origin the surface area measure of a pseudo-cone has higher influence on the shape of the pseudo-cone and should, therefore, be given higher weight. For convex bodies, weighted Minkowski problems have been considered before, see Livshyts \cite{Liv19} and Krysonov and Langharst \cite{KL23}. For $C$-pseudo-cones, a weighting seems particularly natural, since it allows to neglect those regions where the surface area measure has decreasing influence on the shape of the pseudo-cone.

Therefore, we define the following class of functions. The simplifying assumptions are appropriate for our purposes.

\begin{definition}\label{D1.1} 
Let $\Theta: C\setminus\{o\}\to(0,\infty)$ be continuous and homogeneous of degree $-q$, where $q\in\R$.
\end{definition}

An example is given by $\Theta(y):= \|y\|^{-q}$ for $y\in C\setminus\{o\}$. To give another example, we recall from \cite[Sect. 2]{Sch23} that we can choose a unit vector ${\mathfrak v}\in{\rm int}\,C$ with $-{\mathfrak v}\in{\rm int}\, C^\circ$; this vector will be fixed in the following. Then $\Theta(y):= \langle y,\mathfrak v\rangle^{-q}$, $y\in C\setminus\{o\}$, also satisfies the assumptions.

Let $K$ be a $C$-pseudo-cone, and write $\partial_i K:= \partial K\cap {\rm int}\,C$. Let $\partial{\hspace{1pt}}'K$ be the set of boundary points $y\in\partial_i K$ where the outer unit normal vector $\nu_K(y)$ is unique and belongs to $\Omega_{C^\circ}$. (Possibly $\partial{\hspace{1pt}}'K=\emptyset$, namely if $K= C+z$ with $z\in C\setminus\{o\}$.) It is known that the set $\sigma_K$ of boundary points of $K$ where the outer unit normal vector is not unique satisfies $\Ha^{n-1}(\sigma_K)=0$ (where $\Ha^k$ denotes the $k$-dimensional Hausdorff measure) and that $\nu_K:\partial K\setminus\sigma_K \to {\rm cl}\,\Omega_{C^\circ}$ is continuous (${\rm cl}$ denotes the closure). Therefore, it makes sense to define
$$ S_{n-1}^\Theta(K,\omega):= \int_{\nu_K^{-1}(\omega)} \Theta(y) \,\Ha^{n-1}(\D y)$$
for Borel sets $\omega\subset\Omega_{C^\circ}$. This yields a Borel measure on $\Omega_{C^\circ}$, which we call the {\em $\Theta$-weighted surface area measure} of $K$. For convex bodies, surface area measures with respect to a given measure were introduced in \cite{Liv19}. Weighted surface area measures appear also in \cite{FLMZ23} and \cite{LPRY23}.

We shall prove the following weighted Minkowski theorem for pseudo-cones. 

\begin{theorem}\label{T1.1}
Let $\Theta$ be according to Definition $\ref{D1.1}$, and suppose that $n-1<q<n$. Then $S_{n-1}^{\Theta}(K,\cdot)$ is finite for every $C$-pseudo-cone $K$.

Let $\varphi$ be a nonzero, finite Borel measure on $\Omega_{C^\circ}$. Then there exists a $C$-pseudo-cone $K$ with
$$ S_{n-1}^{\Theta}(K,\cdot) =\varphi.$$
\end{theorem}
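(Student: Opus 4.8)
The plan is to prove the two assertions by different means: the finiteness by a direct geometric estimate that uses only $q>n-1$, and the existence by a variational argument of Aleksandrov type, in which the upper bound $q<n$ enters decisively. For the finiteness, fix a $C$-pseudo-cone $K$ and choose $x_0\in\inn K\subseteq\inn C$; then $x_0+C\subseteq K\subseteq C$, so $\partial_i K$ lies between the two cone boundaries, and since $o\notin K$ there is an $r_0>0$ with $\partial_i K\subseteq\{x:\|x\|\ge r_0\}$. By continuity and homogeneity, $\Theta(y)\le a\|y\|^{-q}$ with $a:=\max\{\Theta(u):u\in\Sn\cap C\}$. Monotonicity of surface area under inclusion of convex bodies yields, for $R\ge r_0$,
$$ \Ha^{n-1}(\partial_i K\cap B(o,R))\le\Ha^{n-1}(\partial(C\cap B(o,R)))\le cR^{n-1}$$
with a constant $c=c(C)$. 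A layer-cake integration based on $\|y\|^{-q}=q\int_{\|y\|}^{\infty}r^{-q-1}\,\D r$ then gives
$$ S_{n-1}^{\Theta}(K,\Omega_{C^\circ})\le a\,q\int_{r_0}^{\infty}r^{-q-1}\Ha^{n-1}(\partial_i K\cap B(o,r))\,\D r\le ac\,q\int_{r_0}^{\infty}r^{\,n-2-q}\,\D r,$$
which is finite precisely because $q>n-1$. This settles the first assertion.

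For the existence, I would work with the $\Theta$-weighted covolume $\widetilde V(K):=\int_{C\setminus K}\Theta(x)\,\D x$ and the restricted support function $\overline h_K(u):=\sup_{x\in K}\langle x,u\rangle$ for $u\in\Omega_{C^\circ}$, which satisfies $\langle x_0,u\rangle\le\overline h_K(u)<0$ and recovers $K$ through $K=\{x\in C:\langle x,u\rangle\le\overline h_K(u)\ \forall\,u\in\Omega_{C^\circ}\}$; conversely each admissible negative function $h$ has a Wulff shape $K[h]$ which is a $C$-pseudo-cone. From $x_0+C\subseteq K$ one gets $\widetilde V(K)\le\int_{C\setminus(x_0+C)}\Theta(x)\,\D x$, and the latter integral is finite exactly when $n-1<q<n$: near $o$ convergence requires $q<n$, while along the slab $C\setminus(x_0+C)$ at infinity it requires $q>n-1$. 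Thus $\widetilde V$ is finite throughout the class. An Aleksandrov-type variational lemma, applied to the Wulff shapes $K_t:=K[\overline h_K+t\psi]$ with $\psi$ continuous and compactly supported in $\Omega_{C^\circ}$, should give the first-variation formula
$$ \frac{\D}{\D t}\Big|_{t=0}\widetilde V(K_t)=-\int_{\Omega_{C^\circ}}\psi(u)\,S_{n-1}^{\Theta}(K,\D u),$$
the minus sign reflecting that enlarging $K$ shrinks $C\setminus K$, while the weight $\Theta(y)$ at the boundary point $y$ with $\nu_K(y)=u$ is exactly what converts the swept area element into $S_{n-1}^{\Theta}(K,\D u)$.

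I would then minimize $\widetilde V(K)$ over all $C$-pseudo-cones subject to the normalization $\int_{\Omega_{C^\circ}}(-\overline h_K(u))\,\varphi(\D u)=1$. A minimizing sequence cannot satisfy $\overline h_{K_j}\to 0$ (which would drive the constraint to $0$, and which excludes both filling up $C$ and approaching $o$), nor can it recede to infinity with $\overline h_{K_j}\to-\infty$ (which forces $\widetilde V(K_j)\to\int_C\Theta(x)\,\D x=\infty$, the divergence at infinity being exactly where $q<n$ is used). Hence the sequence stays in a compact family, and by a Blaschke-type selection for $C$-pseudo-cones, together with lower semicontinuity of $\widetilde V$ (Fatou) and continuity of the constraint, it subconverges to a minimizer $K_0$. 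The first-variation formula and the Lagrange condition then give $\int_{\Omega_{C^\circ}}\psi\,S_{n-1}^{\Theta}(K_0,\D u)=\lambda\int_{\Omega_{C^\circ}}\psi\,\varphi(\D u)$ for all admissible $\psi$, whence $S_{n-1}^{\Theta}(K_0,\cdot)=\lambda\varphi$. Differentiating the exact scaling relation $\widetilde V(sK_0)=s^{\,n-q}\widetilde V(K_0)$ at $s=1$ and comparing with the variational formula along the radial direction (a limiting case of the above, since $\overline h_{sK_0}=s\overline h_{K_0}$, justified by exhausting $\overline h_{K_0}$ with compactly supported functions) yields the Euler relation $-\int_{\Omega_{C^\circ}}\overline h_{K_0}(u)\,S_{n-1}^{\Theta}(K_0,\D u)=(n-q)\widetilde V(K_0)$, so that $\lambda=(n-q)\widetilde V(K_0)>0$. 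Finally, $S_{n-1}^{\Theta}(sK_0,\cdot)=s^{\,n-1-q}S_{n-1}^{\Theta}(K_0,\cdot)$ with $n-1-q\ne 0$, so the dilate $K:=sK_0$ determined by $s^{\,n-1-q}\lambda=1$ satisfies $S_{n-1}^{\Theta}(K,\cdot)=\varphi$.

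The step I expect to be the main obstacle is the compactness and nondegeneracy of the minimizer: one must genuinely exclude a minimizing sequence that fills up $C$, recedes to infinity, or collapses towards $o$, and must ensure that the limit $K_0$ is a bona fide $C$-pseudo-cone with $\overline h_{K_0}<0$ on all of $\Omega_{C^\circ}$ and with recession cone exactly $C$. This is most delicate near $\bd\Omega_{C^\circ}$, the directions ``at infinity'', where both the weight $\Theta$ and the surface area measure concentrate; balancing these competing effects is precisely where the two-sided hypothesis $n-1<q<n$ is indispensable, and making the Aleksandrov first-variation formula and the positivity of $\lambda$ rigorous up to that boundary is the technical heart of the proof.
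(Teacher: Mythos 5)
Your finiteness argument is correct, and it is a genuinely different route from the paper's. The paper (Lemma \ref{L3.1}) slices $\partial K$ into the slabs $t_j\le\langle y,\mathfrak v\rangle\le t_{j+1}$, compares each boundary piece with a cylinder over $K\cap C(t_{j+1})$, and sums $t_j^{-q}t_{j+1}^{n-2}$; you instead combine $\Theta(y)\le a\|y\|^{-q}$ with monotonicity of surface area under inclusion of convex bodies (giving $\Ha^{n-1}(\partial_i K\cap B(o,R))\le \Ha^{n-1}(\partial(C\cap B(o,R)))= cR^{n-1}$ by homogeneity of $C$) and a layer-cake integration. Both isolate the same exponent condition $n-2-q<-1$, i.e. $q>n-1$; yours is shorter and avoids the cylinder construction. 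Your remark that $V_\Theta(K)\le\Ha^n_\Theta(C\setminus(x_0+C))$, finite exactly for $n-1<q<n$, likewise matches the paper's Lemma \ref{L4.1}.

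For the existence, your skeleton --- Wulff shapes, a first variation of the weighted covolume equal to $\pm\int f\,\D S^\Theta_{n-1}$, a Lagrange multiplier removed via the homogeneities $n-q$ and $n-1-q$ --- is exactly the paper's (Sections \ref{sec6} and \ref{sec7}, following \cite{HLYZ16} and \cite{KL23}; note the paper's $\overline h_K=-h_K$ is positive, opposite to your convention). But your plan to minimize $V_\Theta$ directly over all of $ps(C)$ under the constraint $\int(-\overline h_K)\,\D\varphi=1$ has a genuine gap at precisely the step you flag, and it is structural, not technical. First, the constraint fixes only a $\varphi$-average of the support function, so your exclusion arguments rule out neither partial collapse toward $o$ in some directions compensated by drift to infinity in others, nor escape of boundary mass toward $\partial\,\Omega_{C^\circ}$; no mechanism is given for the two-sided bounds $0<b_0\le b(K_j)\le c$ that the selection theorem (Lemma \ref{L2.1}) requires. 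Second, the first-variation formula you invoke is established in the paper only for pseudo-cones $C$-determined by a compact $\omega$ (Lemma \ref{KL23L2.7}): its proof, through Lemma \ref{KLL2.6}(b), needs $\varrho_K$ bounded and the radial Gauss map valued in $\omega$, both of which fail for a general pseudo-cone ($\varrho_K$ blows up near $\partial\,\Omega_C$), so even stating the Euler--Lagrange condition at your putative global minimizer requires an unproved extension. Third, $\overline h_{K_t}$ is not $\overline h_K+t\psi$ (only the inequality $\overline h_{[h]}\ge h$ holds); this classical Aleksandrov point is handled in the paper by passing to the $0$-homogeneous functional $\Phi(f)=V_\Theta([f])^{-1/(n-q)}\int_\omega f\,\D\varphi$ and the monotonicity $\Phi(f)\le\Phi(\overline h_{[f]}|_\omega)$, a reduction your sketch omits.

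The paper circumvents all three difficulties by a two-stage scheme rather than a direct global minimization: it solves the problem first for measures on a compact $\omega\subset\Omega_{C^\circ}$, within $\K(C,\omega)$ normalized by $V_\Theta=1$, where compactness is available (Lemma \ref{L7.1} and an extension of \cite[Lem. 8]{Sch18}); it then exhausts $\Omega_{C^\circ}$ by compacts $\omega_j$, solves for $\varphi_j=\varphi(\cdot\cap\omega_j)$, proves uniform bounds on $b(L_j)$ (from above via Lemma \ref{L7.1} and the explicit multiplier $\lambda_j$, from below via \cite[Lem. 9]{Sch23} using $\varphi(\omega(\tau))>0$), extracts a convergent subsequence by Lemma \ref{L2.1}, and identifies $S^\Theta_{n-1}(K,\cdot)=\varphi$ locally on each $\omega_i$ by combining the locality statement $\bm x_K(\omega_i)=\bm x_{K^{(\beta)}}(\omega_i)$ (Lemma \ref{L7.2}) with Lemmas \ref{L2.2} and \ref{L3.3}. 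Unless you supply a substitute for this exhaustion-and-locality mechanism, or prove the compactness and the extended variational formula you presuppose, your proposal does not yet prove the existence half of the theorem.
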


In general, $K$ is not unique up to translation, as we will explain after the proof of Theorem \ref{T1.1}.

In the next section, we fix some notation and collect elementary facts about pseudo-cones. Section \ref{sec3} introduces weighted surface area measures and proves a weak continuity property. Then weighted covolumes are considered. Section \ref{sec5} deals with Wulff shapes, here yielding pseudo-cones. This serves as a preparation for Section \ref{sec6}, where a variational lemma is proved. In principle, and for convex bodies, this goes back to Aleksandrov. However, his proof, involving Minkowski's inequalities for mixed volumes, cannot be used, and we have to adapt to pseudo-cones an approach found by Huang, Lutwak, Yang and Zhang \cite{HLYZ16}. Then follows the proof of Theorem \ref{T1.1}.

\section{Preliminaries on pseudo-cones}\label{sec2}

We have already said that we work in $\R^n$ ($n\ge 2$) and that $\Sn$ is the unit sphere of $\R^n$. Further, $B^n$ is the unit ball. By $\B(\cdot)$ we denote the $\sigma$-algebra of Borel sets of a given space.

We repeat that a pointed, $n$-dimensional closed convex cone $C\subset\R^n$ is given. We recall that 
$$ C^\circ=\{x\in\R^n:\langle x,y \rangle\le 0\mbox{ for all }y \in C\} $$ 
is its dual cone. The recession cone of a nonempty closed convex set $A\subseteq\R^n$ can be defined by
$${\rm rec}\,A= \{z\in\R^n: A+z\subseteq A\}$$
(cf. \cite[§8]{Roc70}). The set of all $C$-pseudo-cones, that is, pseudo-cones with recession cone $C$, is denoted by $ps(C)$.

We use the unit vector $\mathfrak v\in({\rm int}\,C)\cap(-{\rm int}\,C^\circ)$ to define, for $t>0$, the sets
$$ C(t):= \{y\in C: \langle y,\mathfrak v\rangle =t\},\quad C^-(t):= \{y\in C: \langle y,\mathfrak v\rangle \le t\}.$$
These sets are compact.

Convergence of $C$-pseudo-cones is defined as in \cite{Sch23}:

\begin{definition}\label{D2.0}
If $K_j$, $j\in\N_0$, are $C$-pseudo-cones, we write
$$ K_j\to K_0$$
and say that $(K_j)_{j\in\N}$ converges to $K_0$ if there exists $t_0>0$ such that  $K_j\cap C^-(t_0)\not=\emptyset$ for all $j\in \N_0$ and
$$ \lim_{j\to\infty} (K_j\cap C^-(t)) = K_0\cap C^-(t)\quad\mbox{for all }t\ge t_0,$$
where the latter means convergence of convex bodies with respect to the Hausdorff metric $d_H$.
\end{definition}

We mention the following counterpart to the Blaschke selection theorem (see \cite[Lem. 1]{Sch23}).

\begin{lemma}\label{L2.1}
Every sequence of $C$-pseudo-cones for which the distances from the origin are bounded and bounded away from zero has a subsequence that converges to a $C$-pseudo-cone.
\end{lemma}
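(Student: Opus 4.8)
The plan is to reduce the assertion to the classical Blaschke selection theorem applied to the compact truncations $K_j\cap C^-(t)$, to extract by diagonalization a single subsequence along which all these truncations converge, and finally to reassemble the limits into one $C$-pseudo-cone.

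First I would use the distance bounds to control the truncations. Choose $0<\rho\le R$ with $\rho\le\operatorname{dist}(o,K_j)\le R$ for all $j$, and set $t_0:=R+1$. Since $K_j$ is closed, the distance is attained at some $x_j\in K_j$ with $\|x_j\|\le R$; as $K_j\subseteq\rec K_j=C$ and $\mathfrak v$ is a unit vector, $\langle x_j,\mathfrak v\rangle\le\|x_j\|\le R<t_0$, so $x_j\in K_j\cap C^-(t_0)$ and hence $\emptyset\neq K_j\cap C^-(t)\subseteq C^-(t)$ for every $t\ge t_0$. Thus each $K_j\cap C^-(t)$ is a nonempty compact convex set lying in the fixed compact set $C^-(t)$. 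Passing to a subsequence I may also assume $x_j\to x_0$, where $\rho\le\|x_0\|\le R$ and $\langle x_0,\mathfrak v\rangle\le R<t_0$.

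Next I extract the limit. For each $m\in\N$ the sets $K_j\cap C^-(t_0+m)$ all lie in the compact set $C^-(t_0+m)$, so the Blaschke selection theorem combined with a diagonal argument yields one subsequence (not relabelled) such that $K_j\cap C^-(t_0+m)\to L_m$ in the Hausdorff metric for every $m$, each $L_m$ being a nonempty compact convex subset of $C$ containing $x_0$. The decisive tool is the continuity of intersection with a fixed closed halfspace: if convex bodies converge and the limit meets the corresponding \emph{open} halfspace, then their intersections with the closed halfspace converge to that of the limit. Because $x_0\in L_m$ and $\langle x_0,\mathfrak v\rangle<t_0\le t$, this applies to the halfspace $\{x:\langle x,\mathfrak v\rangle\le t\}$ for every $t\ge t_0$; it gives the consistency $L_m=L_{m'}\cap C^-(t_0+m)$ for $m'\ge m$ and, for arbitrary $t\ge t_0$ and $m$ with $t_0+m\ge t$, the convergence $K_j\cap C^-(t)=(K_j\cap C^-(t_0+m))\cap C^-(t)\to L_m\cap C^-(t)$. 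Setting $K_0:=\cl\bigcup_m L_m$, the consistency relations give $K_0\cap C^-(t)=L_m\cap C^-(t)$ whenever $t_0+m\ge t$, so $K_j\cap C^-(t)\to K_0\cap C^-(t)$ for all $t\ge t_0$, which is precisely the convergence of Definition \ref{D2.0}.

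It then remains to check that $K_0\in ps(C)$. It is nonempty ($x_0\in K_0$), closed by construction, and convex because the $L_m$ are nested convex sets. Every point of $\bigcup_m L_m$ is a limit of points of the $K_j$ and hence has norm $\ge\rho$, so $\operatorname{dist}(o,K_0)\ge\rho>0$ and $o\notin K_0$. Since $K_0\subseteq C$ and $C$ is a closed cone, $\rec K_0\subseteq C$. For the reverse inclusion, let $y\in K_0$ and $z\in C$; approximating $y$ by points $y^{(k)}\in L_{m_k}$, each a limit of some $y^{(k)}_j\in K_j$, and using $z\in C=\rec K_j$ we get $y^{(k)}_j+z\in K_j$; as these eventually lie in a fixed truncation $C^-(T)$, the convergence established above places $y^{(k)}+z$ in $K_0$, and then $y+z\in K_0$ by closedness. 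Hence $\rec K_0=C$, and writing $\lambda y=y+(\lambda-1)y$ with $(\lambda-1)y\in C$ shows $\lambda K_0\subseteq K_0$ for $\lambda\ge1$, so $K_0$ is a $C$-pseudo-cone. I expect the main obstacle to be this second step: that convergence of the countably many truncations $K_j\cap C^-(t_0+m)$ forces convergence for all $t\ge t_0$ and that the limits $L_m$ are mutually consistent. Both rely on the continuity of the halfspace-intersection map, which fails without transversality; the upper distance bound, through the common point $x_0$ lying strictly below every level $t$, is exactly what secures that transversality.
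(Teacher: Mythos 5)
Your proof is correct, and it is essentially the standard argument: the paper itself does not prove Lemma \ref{L2.1} but cites \cite[Lem.~1]{Sch23}, whose proof proceeds along the same lines — truncate by the compact sets $C^-(t)$, apply the classical Blaschke selection theorem with a diagonal argument, and assemble the nested limits into the limit pseudo-cone. Your use of the common point $x_0$ (from the upper distance bound) to secure the transversality needed for continuity of intersection with the halfspaces, and your verification that $\rec K_0=C$ and $o\notin K_0$ via the lower distance bound, supply exactly the details this construction requires, in accordance with Definition \ref{D2.0}.
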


In the following, we try to keep the notation close to that of \cite{HLYZ16}. There are differences, due to the non-compactness of pseudo-cones. One has also to be careful with parts of a $C$-pseudo-cone contained in the boundary of $C$. Moreover, the role of the unit sphere $\Sn$ in \cite{HLYZ16} is now split between its two subsets $\Omega_{C}$ and $\Omega_{C^\circ}$, which are not closed. For greater clarity, we try to reserve the letter $u$ for vectors in $\Omega_{C^\circ}$ and the letter $v$ for vectors in $\Omega_C$. Similarly, we often denote subsets of $\Omega_{C^\circ}$ by $\omega$ and subsets of $\Omega_C$ by $\eta$.

Let $K\in ps(C)$. The {\em support function} of $K$ is defined by
$$ h_K(x) := \sup\{\langle x,y\rangle: y\in K\}\quad\mbox{for } x\in C^\circ.$$
The supremum is an attained maximum if $x\in{\rm int}\,C^\circ$. For $x\in\partial C^\circ$, it may either be attained or not attained. We have restricted the domain to $C^\circ$ to obtain a real-valued support function. The support function is continuous and sublinear. Since for pseudo-cones it is non-positive, we often write $\overline h_K:= - h_K$.

For $u\in\Omega_{C^\circ}$ we denote by
$$ H_K(u):= \{y\in \R^n: \langle y,u\rangle = h_K(u)\}, \quad H^-_K(u):= \{y\in \R^n: \langle y,u\rangle \le h_K(u)\}$$ 
the {\em supporting hyperplane} of $K$ with outer unit normal vector $u$ and the halfspace containing $K$ that it bounds.

The {\em radial function} of $K$ is defined by 
$$ \varrho_K(y) := \min\{r\in\R: ry\in K\}\quad\mbox{ for }y\in C_K,$$
where
$$ C_K:= \{y\in C: ry\in K\mbox{ for some }r>0\} \supseteq {\rm int}\,C.$$
The latter inclusion holds since $C$ is the recession cone of $K$. We point out that $\varrho_K(y)y\in\partial K$. Note that $C_K$ is the maximal domain on which $\varrho_K$ can be defined as a real number. We set also
$$ \Omega_{C,K}:= C_K\cap \Sn.$$
The radial function is continuous, positive, and homogeneous of degree $-1$. 

For $u\in{\rm cl}\,\Omega_{C^\circ}$,
\begin{eqnarray*}
h_K(u) &=& \sup\{\langle u,y\rangle:y\in K\} = \sup\{\langle u,y\rangle:y\in \partial K\} \\
&=& \sup\{\langle u,\varrho_K(v)v\rangle: v\in\Omega_{C,K}\} = \sup\{\langle u,\varrho_K(v)v\rangle: v\in\Omega_C\}
\end{eqnarray*}
(where the last equality holds since $\Omega_{C,K}\subseteq{\rm cl}\,\Omega_C$), hence
\begin{equation}\label{2.2}
h_K(u) = \sup_{v\in\Omega_C} \langle u,v\rangle \varrho_K(v)\quad\mbox{for } u\in{\rm cl}\,\Omega_{C^\circ}.
\end{equation}

For $v\in\Omega_C$,
\begin{eqnarray*}
\frac{1}{\varrho_K(v)} &=& \max\{ 1/r:rv\in K\} = \max\{1/r: \langle rv,u\rangle\le h_K(u)\,\forall\,u\in{\rm cl}\,\Omega_{C^\circ}\}\\
&=& \max\left\{1/r: \frac{|\langle v,u\rangle|}{\overline h_K(u)}\ge\frac{1}{r} \,\forall\,u\in{\rm cl}\,\Omega_{C^\circ}\right\}\\
&=& \max \left\{\lambda : \min_{u\in{\rm cl}\,\Omega_{C^\circ}} \frac{|\langle v,u\rangle|}{\overline h_K(u)} \ge\lambda\right\},
\end{eqnarray*}
hence
\begin{equation}\label{2.0}
\frac{1}{\varrho_K(v)} = \min_{u\in{\rm cl}\,\Omega_{C^\circ}} \frac{|\langle v,u\rangle|}{\overline h_K(u)} \quad\mbox{for }v\in\Omega_C.
\end{equation}

\noindent{\bf Remark.} 
It follows from the derivation that the minimum is attained at $u\in {\rm cl}\,\Omega_{C^\circ}$ if and only if $u$ is an outer unit normal vector of $K$ at $\varrho_K(v)v$.

The following three definitions can essentially be carried over from the case of convex bodies. As mentioned, the notation is kept close to that of \cite{HLYZ16}. We always assume that $K$ is a given $C$-pseudo-cone.

\begin{definition}\label{D2.1}
The reverse spherical image of $\omega\subseteq \Omega_{C^\circ}$ (with respect to $K$) is defined by
$$ \bm x_K(\omega):=\{y\in\partial K: y\in H_K(u) \mbox{ for some }u\in\omega\}.$$
\end{definition}

Recall that $\sigma_K$ is the set of all $y\in\partial K$ where the outer unit normal vector is not unique. Let $\omega_K$ be the set of all $u\in\Omega_{C^\circ}$ for which the set $\bm x_K(\{u\})$ contains more than one element. 

\begin{definition}\label{D2.2}
For $y\in \partial{\hspace{1pt}}' K$, let $\nu_K(y)\in \Omega_{C^\circ}$ be the unique outer unit normal vector of $K$ at $y$. The map $\nu_K:\partial{\hspace{1pt}}'K\to\Omega_{C^\circ}$ is called the {\em spherical image map} of $K$.

For $u\in \Omega_{C^\circ}\setminus \omega_K$, let $x_K\in \partial K$ be the unique element of $\bm x_K(\{u\})$. The map $x_K:\Omega_{C^\circ}\setminus \omega_K\to\partial K$ is called the {\em reverse spherical image map} of $K$.
\end{definition}

The following assertions follow directly from the corresponding ones for convex bodies (see \cite[p. 339]{HLYZ16} for references). As already mentioned, we have $\Ha^{n-1}(\sigma_K)=0$, and the spherical image map $\nu_K$ is continuous on $\partial{\hspace{1pt}}' K\setminus \sigma_K$. We have $\Ha^{n-1}(\omega_K)=0$, and the reverse spherical image map $x_K$ is continuous on $\Omega_{C^\circ}\setminus \omega_K$. 

\begin{definition}\label{D2.3}
The {\em radial map} $r_K: \Omega_{C,K}\to\partial K$ of $K$ is defined by
$$ r_K(v) := \varrho_K(v)v\quad\mbox{for }v\in\Omega_{C,K}.$$
We define $\eta_K:= r_K^{-1}(\sigma_K)$.

The {\em radial Gauss map} $\alpha_K:\Omega_{C,K}\setminus\eta_K\to {\rm cl}\,\Omega_{C^\circ}$ of $K$ is defined by
$$ \alpha_K:= \nu_K\circ r_K.$$
\end{definition}

We recall from \cite[Sect. 8]{Sch18} that $K$ is {\em $C$-determined} by the nonempty, compact set $\omega\subset\Omega_{C^\circ}$ if
$$ K= C\cap\bigcap_{u\in\omega} H^-_K(u).$$
By $\K(C,\omega)$ we denote the set of $C$-pseudo-cones that are $C$-determined by $\omega$. If $K\in\K(C,\omega)$, then the absolute support function $\overline h_K$, restricted to $\omega$, is positive and bounded away from zero. If $K$ is $C$-determined by $\omega$, then $C\setminus K$ is bounded, since $\omega$ has a positive spherical distance from the boundary of $\Omega_{C^\circ}$. A $C$-pseudo-cone $K$ is called $C$-determined if it is $C$-determined by some nonempty, compact set $\omega\subset\Omega_{C^\circ}$. 

Given a pseudo-cone $K\in ps(C)$ and a nonempty, compact set $\omega\subset\Omega_{C^\circ}$, we can define a $C$-determined set by
\begin{equation}\label{2.1}
 K^{(\omega)}:= C\cap\bigcap_{u\in\omega} H^-_K(u).
\end{equation}
For this construction, we shall need the following lemma.

\begin{lemma}\label{L2.2}
If $K_j\in ps(C)$, $j\in\N_0$, satisfy $K_j\to K_0$ as $j\to\infty$ and if $\emptyset\not=\omega\subset\Omega_{C^\circ}$ is compact, then
$$ K_j^{(\omega)} \to K_0^{(\omega)}\quad\mbox{as } j\to\infty.$$
\end{lemma}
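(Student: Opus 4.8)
The plan is to show convergence of the $C$-determined sets $K_j^{(\omega)}$ to $K_0^{(\omega)}$ by working within the compact pieces $C^-(t)$, since convergence of $C$-pseudo-cones is defined through Hausdorff convergence of these truncations. The key observation is that $K^{(\omega)} = C \cap \bigcap_{u\in\omega} H^-_K(u)$ depends on $K$ only through the support-function values $h_K(u)$ for $u\in\omega$. So the first step is to understand how $h_{K_j}(u)$ behaves as $j\to\infty$, uniformly for $u\in\omega$. Because $\omega$ is a compact subset of $\Omega_{C^\circ}$, it has positive spherical distance from $\partial\Omega_{C^\circ}$, and for each $u\in\omega$ the supremum defining $h_K(u)$ is attained at a boundary point of $K$ lying in a bounded region of $C$ (inside some $C^-(t_1)$). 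Using the relation \eqref{2.2} together with the defining Hausdorff convergence $K_j\cap C^-(t)\to K_0\cap C^-(t)$, I would establish that $h_{K_j}\to h_{K_0}$ uniformly on $\omega$.

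\textbf{Reducing to truncations.} Once uniform convergence of support functions on $\omega$ is in hand, I would fix $t\ge t_0$ large enough that each $K_j^{(\omega)}\cap C^-(t)$ is nonempty and carries the relevant geometry, and prove $K_j^{(\omega)}\cap C^-(t)\to K_0^{(\omega)}\cap C^-(t)$ in the Hausdorff metric. The set $K_j^{(\omega)}\cap C^-(t)$ is the intersection of the compact convex body $C^-(t)$ with finitely-approximable halfspaces $H^-_{K_j}(u)$ whose bounding hyperplanes $\{y:\langle y,u\rangle = h_{K_j}(u)\}$ move continuously with $h_{K_j}(u)$. Here I would invoke the standard fact that the intersection map $(A, \{H^-(u)\}_{u\in\omega}) \mapsto A\cap\bigcap_u H^-(u)$ is continuous with respect to Hausdorff convergence of the compact set $A$ and uniform convergence of the halfspace parameters, provided the intersection has nonempty interior (or at least does not degenerate). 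The uniform convergence $h_{K_j}\to h_{K_0}$ on $\omega$ supplies exactly the continuity of the halfspace data.

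\textbf{The main obstacle} I expect is controlling the lower semicontinuity, i.e.\ ensuring the limit set is not too \emph{small}: I must rule out the possibility that points of $K_0^{(\omega)}$ fail to be approached by points of $K_j^{(\omega)}$. The upper bound (points of $K_j^{(\omega)}$ accumulate in $K_0^{(\omega)}$) follows readily from the halfspace inequalities and uniform convergence of support functions. For the lower bound one typically needs that the intersection is \emph{regular}, meaning it equals the closure of the intersection of the corresponding open halfspaces with $\inn C$; degeneracies can occur where a supporting hyperplane $H_{K_0}(u)$ touches $C^-(t)$ tangentially. To handle this I would argue that any $y\in\relint(K_0^{(\omega)}\cap C^-(t))$ satisfies all the defining inequalities strictly, hence lies in $K_j^{(\omega)}$ for all large $j$ by uniform convergence; since $K_0^{(\omega)}\cap C^-(t)$ is the closure of its relative interior, this yields the full lower bound, and combining with the upper bound gives Hausdorff convergence.

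\textbf{Conclusion.} Finally, since the displayed convergence holds for every sufficiently large $t$, and since $K_j^{(\omega)}$ and $K_0^{(\omega)}$ are $C$-determined pseudo-cones with the required truncations converging, I would verify the nonemptiness condition $K_j^{(\omega)}\cap C^-(t_0')\ne\emptyset$ for some common $t_0'$ (which holds because the converging support functions stay bounded away from zero on $\omega$, so the sets $C\setminus K_j^{(\omega)}$ remain uniformly bounded) and conclude $K_j^{(\omega)}\to K_0^{(\omega)}$ in the sense of Definition \ref{D2.0}.
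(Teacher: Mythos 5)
Your proof is correct, and its first half coincides with the paper's: both arguments reduce the lemma to convergence of the support functions on $\omega$, using that the compact set $\omega\subset\Omega_{C^\circ}$ forces the support sets $\bm x_{K_j}(\omega)$ into a common truncation $C^-(t_1)$, so that $\sup_{u\in\omega}|h_{K_j}(u)-h_{K_0}(u)|\le d_H(K_j\cap C^-(t_1),K_0\cap C^-(t_1))\to 0$. Where you genuinely diverge is the lower-semicontinuity half. The paper verifies condition (a) of \cite[Thm.\ 1.8.8]{Sch14} quantitatively: for $x\in K_0^{(\omega)}$ it takes $x_j$ to be the point where the ray through $o$ and $x$ meets $\partial K_j^{(\omega)}$ and proves the explicit bound $\|x_j-x\|\le d_H(K_j\cap C^-(t_1),K_0\cap C^-(t_1))/a$, where $a>0$ bounds $|\cos\angle(x,u)|$ from below for $u\in\omega$ --- the angle bound again coming from the positive spherical distance of $\omega$ from $\partial\,\Omega_{C^\circ}$. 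You instead argue softly: an interior point $y$ of $K_0^{(\omega)}\cap C^-(t)$ satisfies $\langle y,u\rangle<h_{K_0}(u)$ for all $u\in\omega$, the margin $\min_{u\in\omega}\bigl(h_{K_0}(u)-\langle y,u\rangle\bigr)$ is positive by compactness of $\omega$, uniform convergence of $h_{K_j}$ then puts $y$ in $K_j^{(\omega)}$ for large $j$, and since $K_0^{(\omega)}\cap C^-(t)$ has nonempty interior for large $t$ (it contains $K_0$, which is $n$-dimensional) and is the closure of that interior, a diagonal argument yields the full lower bound. Both routes are valid; the paper's radial estimate buys a rate (Lipschitz dependence on the Hausdorff distance of the truncations) and handles all of $K_0^{(\omega)}$ at one stroke, whereas yours is more elementary, avoiding the radial construction and the angle estimate at this step, and it correctly disposes of the degeneracy worry you raise, since the closure-of-interior property is automatic here. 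Two small points to tighten: like the paper, you assert rather than prove the uniform localization $\bm x_{K_j}(\omega)\subset C^-(t_1)$ for all $j$; to close it, pick $z\in{\rm int}\,K_0$, note $z\in K_j$ for large $j$, hence $C+z\subseteq K_j$ gives $\overline h_{K_j}(u)\le\|z\|$ on $\omega$, and the bound $|\langle x,u\rangle|\ge a\|x\|$ for $x\in C$, $u\in\omega$ then bounds the support points. And for the nonemptiness requirement of Definition \ref{D2.0} there is a shortcut you can use instead of your boundedness argument: $K_j^{(\omega)}\cap C^-(t_0)\supseteq K_j\cap C^-(t_0)\ne\emptyset$ directly from the hypothesis.
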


\begin{proof}
First we note that the convergence $K_j\to K_0$ implies the existence of a number $t_0$ such that $K_j\cap C^-(t_0)\not=\emptyset$ for $j\in \N_0$ and $K_j\cap C^-(t)\to K\cap C^-(t)$ for $t\ge t_0$, with convergence in the sense of the Hausdorff metric for convex bodies. Since $\omega$ is a compact subset of $\Omega_{C^\circ}$, we can choose $t_0$ so large that $\bm x_{K_j}(\omega)\subset C^-(t_0)$ for all $j$. In particular, $h_{K_j}(u)\to h_{K_0}(u)$ for $u\in\omega$.

We use Theorem 1.8.8 of \cite{Sch14}. Let $x\in K_0^{(\omega)}$. Let $x_j$ be the point where the ray through $o$ and $x$ intersects the boundary of $K_j^{(\omega)}$. Then $x_j$ is contained in the boundary of a halfspace $\{y\in\R^n:\langle y,u\rangle \le h_{K_j}(u)\}$ for some $u\in\omega$, and $x$ is contained in the halfspace 
$\{y\in\R^n:\langle y,u\rangle\le h_{K_0}(u)\}$. The boundaries of these halfspaces have distance $|h_{K_j}(u)-h_{K_0}(u)|\le d_H(K_j\cap C^-(t_0),K_0\cap C^-(t_0))$. The angle $\alpha$ between $x$ and $u$ satisfies $|\cos\alpha|\ge a$ with some constant $a>0$, since $\omega$ has a positive spherical distance from the boundary of $\Omega_{C^\circ}$. It follows that $\|x_j-x\|\le d_H(K_j\cap C^-(t_0),K_0\cap C^-(t_0))/a$. We conclude that $x=\lim_{j\to\infty} x_j$; thus (a) of \cite[Thm. 1.8.8]{Sch14} is satisfied.

Suppose that $x_j\in K_{i_j}^{(\omega)}$ for $j\in\N$ and $x_j\to x$. Then $x\in C$. For each $u\in\omega$ we have $\langle x_j,u\rangle \le  h_{K_{i_j}}(u)$ for all $j$, hence also $\langle x,u\rangle \le  h_{K_0}(u)$. This shows that $x\in K_0^{(\omega)}$. We have proved that also (b) of \cite[Thm. 1.8.8]{Sch14} is satisfied.
\end{proof}

\section{Weighted surface area measures}\label{sec3}

Since in the following we deal with measures on the sphere, we confirm first the convention that the unit sphere $\Sn$ is equipped with the usual spherical Lebesgue measure and integrals such as $\int_\beta \cdot\,\D u$, where $\beta\subset \Sn$ is Lebesgue measurable, always refer to integration with respect to this measure. Also `measurable' without further comment means `Lebesgue measurable', and `almost all $u\in\beta$' means all $u\in\beta$ up to a subset of $\beta$ of Lebesgue measure zero.

In the following, we assume that a function $\Theta$  according to Definition \ref{D1.1} is given. In this section, we assume that $q>n-1$. For a $C$-pseudo-cone $K$, we denote by $S_{n-1}^{\Theta}(K,\cdot)$ its $\Theta$-weighted surface area measure. 

\begin{lemma}\label{L3.1}
If $q>n-1$, the measure $S_{n-1}^{\Theta}(K,\cdot)$ is finite for every $C$-pseudo-cone $K$.
\end{lemma}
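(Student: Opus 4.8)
The plan is to bound the total mass $S_{n-1}^{\Theta}(K,\Omega_{C^\circ})=\int_{\partial{\hspace{1pt}}'K}\Theta\,\Ha^{n-1}(\D y)$, and since $\partial{\hspace{1pt}}'K\subseteq\partial_iK$ and $\Theta>0$, it suffices to show $\int_{\partial_iK}\Theta(y)\,\Ha^{n-1}(\D y)<\infty$. First I would reduce to a convenient weight. The slice $C(1)$ is compact and $\Theta$ is continuous and positive on $C\setminus\{o\}$, so $c_1\le\Theta\le c_2$ on $C(1)$ for constants $0<c_1\le c_2$; by homogeneity of degree $-q$, writing $y=\langle y,\mathfrak v\rangle\,\hat y$ with $\hat y\in C(1)$ gives $\Theta(y)\le c_2\langle y,\mathfrak v\rangle^{-q}$ for all $y\in C\setminus\{o\}$. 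Hence it is enough to bound $\int_{\partial_iK}\langle y,\mathfrak v\rangle^{-q}\,\Ha^{n-1}(\D y)$. Since $\mathfrak v\in(\inn C)\cap(-\inn C^\circ)$, one has $\langle y,\mathfrak v\rangle\ge a\|y\|$ on $C$ for some $a>0$, and as $K$ is closed with $o\notin K$ it is bounded away from the origin; therefore $t_0:=\inf_{y\in K}\langle y,\mathfrak v\rangle>0$.

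Next I would decompose $\partial_iK$ dyadically by the level function $\langle\cdot,\mathfrak v\rangle$. For $k\in\Z$ set $A_k:=\{y\in\partial_iK:2^k\le\langle y,\mathfrak v\rangle<2^{k+1}\}$; by the previous paragraph $A_k=\emptyset$ unless $2^{k+1}>t_0$, so only $k\ge k_0$ for some finite $k_0$ contribute, and on $A_k$ we have $\langle y,\mathfrak v\rangle^{-q}\le 2^{-kq}$. The crux is the geometric estimate
\[
 \Ha^{n-1}(A_k)\le c\,2^{k(n-1)},\qquad c:=\Ha^{n-1}\bigl(\partial(C^-(2)\cap\{\langle\cdot,\mathfrak v\rangle\ge 1\})\bigr)<\infty .
\]
To prove it, consider the compact convex set $K_{[a,b]}:=K\cap\{a\le\langle\cdot,\mathfrak v\rangle\le b\}$, which is bounded since $K\subseteq C$ and the slices $C^-(t)$ are compact. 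Every point of $A_k$ lies either strictly inside the slab (where $K$ and $K_{[2^k,2^{k+1}]}$ agree locally) or on its bottom face, so $A_k\subseteq\partial K_{[2^k,2^{k+1}]}$ and thus $\Ha^{n-1}(A_k)\le\Ha^{n-1}(\partial K_{[2^k,2^{k+1}]})$. Because $K_{[2^k,2^{k+1}]}\subseteq C\cap\{2^k\le\langle\cdot,\mathfrak v\rangle\le 2^{k+1}\}$ and surface area is monotone under inclusion of convex bodies (see \cite{Sch14}), this is at most $\Ha^{n-1}(\partial(C\cap\{2^k\le\langle\cdot,\mathfrak v\rangle\le 2^{k+1}\}))$. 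The latter cone-frustum is the dilate by $2^k$ of the fixed body $C\cap\{1\le\langle\cdot,\mathfrak v\rangle\le 2\}$, so its boundary measure equals $2^{k(n-1)}c$ by the scaling behaviour of $\Ha^{n-1}$, giving the claimed bound.

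Combining the two estimates yields
\[
 \int_{\partial_iK}\langle y,\mathfrak v\rangle^{-q}\,\Ha^{n-1}(\D y)
 =\sum_{k\ge k_0}\int_{A_k}\langle y,\mathfrak v\rangle^{-q}\,\Ha^{n-1}(\D y)
 \le c\sum_{k\ge k_0}2^{-kq}\,2^{k(n-1)}
 =c\sum_{k\ge k_0}2^{k(n-1-q)} ,
\]
and this geometric series converges precisely because $q>n-1$, so the exponent $n-1-q$ is negative. Multiplying by $c_2$ then bounds $S_{n-1}^{\Theta}(K,\Omega_{C^\circ})$, proving finiteness. I expect the main obstacle to be the surface-area estimate for $A_k$: one must justify carefully that the relevant part of $\partial K$ sits inside the boundary of the truncated convex body $K_{[2^k,2^{k+1}]}$ and then invoke monotonicity of surface area together with homogeneity of Hausdorff measure; everything else is elementary bookkeeping, and the hypotheses ($q>n-1$ for decay at infinity, $o\notin K$ to cut off the levels near the origin) enter exactly at the two ends of the dyadic sum.
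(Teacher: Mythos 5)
Your proof is correct, and although it shares the paper's overall skeleton --- the bound $\Theta(y)\le c\,\langle y,\mathfrak v\rangle^{-q}$ on $C\setminus\{o\}$ via compactness of $C(1)$ and homogeneity, a decomposition of $\partial K$ by levels of $\langle\cdot\,,\mathfrak v\rangle$, a slab-wise surface area estimate by monotonicity, and a series converging exactly when $q>n-1$ --- your key estimate takes a genuinely different and more economical route. The paper uses slabs of \emph{unit} width $t_j=t_0+j$, and with such slabs a crude comparison is not affordable: it must isolate the lateral boundary part $S_j$ (strict inequalities), compare it with a cylinder $Z_j$ erected over the top cross-section plus a bottom annulus whose width is controlled by an interior translate $C_z=C+z\subseteq K$, thereby reducing everything to the $(n-2)$-dimensional rim measures $\Ha^{n-2}(m_{j+1})\le t_{j+1}^{n-2}\,\Ha^{n-2}(\partial C\cap C(1))$ and the series $\sum_j t_j^{-q}t_{j+1}^{n-2}$. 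You instead use \emph{dyadic} slabs and compare the full truncation $K_{[2^k,2^{k+1}]}$ directly with the cone frustum $2^k\bigl(C\cap\{1\le\langle\cdot\,,\mathfrak v\rangle\le 2\}\bigr)$, using only $K\subseteq C$ and monotonicity of surface area; no cylinder, no annulus estimate, and no interior cone $C_z$ are needed. The point that makes this work --- worth stating explicitly --- is that for a dyadic slab the two flat truncation faces and the lateral surface of the frustum are of the \emph{same} order $2^{k(n-1)}$, so including the faces (which are not part of $\partial K$) costs only a constant factor; with unit slabs the faces ($\sim t^{n-1}$) would dominate the lateral part ($\sim t^{n-2}$) and the same crude comparison would only prove finiteness for $q>n$, which is exactly why the paper, committed to arithmetic slabs, works harder to discard them. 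Two small points to tidy: the inclusion $A_k\subseteq\partial K_{[2^k,2^{k+1}]}$ needs no case distinction, since $y\in\partial K\cap K_{[2^k,2^{k+1}]}$ together with ${\rm int}\,K_{[2^k,2^{k+1}]}\subseteq{\rm int}\,K$ already gives it; and when invoking monotonicity you should note that $K_{[2^k,2^{k+1}]}$ may be empty or lower-dimensional for the first few $k$, which is harmless because $\Ha^{n-1}(\partial\cdot)$ is dominated by the surface area functional, which is monotone under inclusion of arbitrary compact convex sets. Your near-origin cutoff $t_0=\inf_{y\in K}\langle y,\mathfrak v\rangle>0$ (from $o\notin K$, closedness of $K$, and $\langle y,\mathfrak v\rangle\ge a\|y\|$ on $C$, the latter because $-\mathfrak v\in{\rm int}\,C^\circ$) plays the role of the paper's truncation at $C^-(t_0)$ and is fine.
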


\begin{proof}
First, let $y\in C\setminus\{o\}$. There exists $\lambda>0$ with $\langle \lambda y,\mathfrak v\rangle =1$. Since $C(1)$ is compact and $\Theta$ is continuous on $C(1)$, there exists a number $c_0>0$ with $\Theta(z) \le c_0$ for $z\in C(1)$. Hence, $\Theta(\lambda y) \le c_0\langle \lambda y,\mathfrak v\rangle^{-q}$. Since $\Theta$ is homogeneous of degree $-q$, it follows that 
\begin{equation}\label{3.1}
\Theta(y)\le c_0\langle y,\mathfrak v\rangle^{-q} \quad \mbox{for }y\in C\setminus\{o\}.
\end{equation}

We choose $z \in {\rm int}\,K$, then the translated cone $C_z:= C+z$ is contained in $K$ (since $C$ is the recession cone of $K$). Further, we choose $t_0>0$ with $C(t_0)\cap {\rm int}\,C_z\not=\emptyset$. Since
$$ \int_{\partial K\cap C^-(t_0)} \Theta(y)\,\Ha^{n-1}(\D y)<\infty,$$
it suffices to show that
$$ \int_{\partial K\setminus C^-(t_0)}  \Theta(y)\,\Ha^{n-1}(\D y)<\infty.$$
To prove this, we want to use that the surface area of convex bodies is monotone under set inclusion. Therefore, we define $t_j:=t_0+j$ for $j\in\N$ and the convex bodies
$$ K_j:= \{y\in K: t_j\le \langle y,\mathfrak v\rangle\le t_{j+1}\},$$
together with parts of their boundary,
$$ S_j:= \{y\in\partial K_j: t_j< \langle y,\mathfrak v\rangle< t_{j+1}\}.$$
We compare $K_j$ with the cylinder
$$ Z_j:= \left[K\cap C(t_{j+1})\right]+\{\lambda \mathfrak v: -1\le \lambda\le 0\}$$
and its boundary part
$$ T_j:= \{y\in\partial Z_j: t_j< \langle y,\mathfrak v\rangle< t_{j+1}\}.$$  
We need also the $(n-2)$-dimensional sets
$$ m_j:= \partial K\cap C(t_j).$$
Since $K_j\subset Z_j$ we have
$$ \Ha^{n-1}(S_j)\le \Ha^{n-1}(T_j) + \Ha^{n-1}((T_j\setminus K_j)\cap C(t_j)).$$
Here we have 
$$\Ha^{n-1}(T_j) = \Ha^{n-2}(m_{j+1}),$$
and since $C_z\subset K$, we can estimate
$$ \Ha^{n-1}((T_j\setminus K_j)\cap C(t_j))\le c_1\Ha^{n-2}(m_j)\le c_1\Ha^{n-2}(m_{j+1})$$
with a constant $c_1$ independent of $j$. Thus,
\begin{equation}\label{3.2}
\Ha^{n-1}(S_j) \le c_2 \Ha^{n-2}(m_{j+1})
\end{equation}
with a constant $c_2$. Using (\ref{3.1}) and (\ref{3.2}), we obtain
\begin{eqnarray*}
&& \int_{\partial K\setminus C^-(t_0)}  \Theta(y)\,\Ha^{n-1}(\D y)\le c_0 \int_{\partial K\setminus C^-(t_0)} \langle y,\mathfrak v\rangle^{-q}\,\Ha^{n-1}(\D y)\\
&&= c_0 \sum_{j=0}^\infty\int_{S_j} \langle y,\mathfrak v\rangle^{-q}\,\Ha^{n-1}(\D y) \le c_0 \sum_{j=0}^\infty\int_{S_j} t_j^{-q}\,\Ha^{n-1}(\D y)\\
&&\le c_0c_2\sum_{j=0}^\infty t_j^{-q}\Ha^{n-2}(m_{j+1}) \le c_0c_2 \sum_{j=0}^\infty t_j^{-q}\Ha^{n-2}(\partial C\cap C(t_{j+1}))\\
&&= c_0c_2 \sum_{j=0}^\infty t_j^{-q} t_{j+1}^{n-2}\Ha^{n-2}(\partial C\cap C(1)) <\infty,
\end{eqnarray*}
as stated. It is now obvious why we have assumed that $q>n-1$. 
\end{proof}

We need a lemma to transform integrals over $\partial_i K$ to integrals over $\Omega_C$, via the radial map. For convex bodies, a corresponding lemma is proved in \cite[Lem. 2.9]{HLYZ16} by using the divergence theorem for sets of finite perimeter (\cite[§5.8]{EG92}), and also in \cite[p. 171]{HW20} by employing the coarea formula for rectifiable manifolds (\cite[Thm. 3.2.22]{Fed69}). The result appeared already as Lemma 1 in Aleksandrov \cite{Ale39}, but with a different interpretation of the surface area of a Borel set in the boundary of a convex body, not using the Hausdorff measure. The proof given in \cite{HW20}  works for pseudo-cones as well. Nevertheless, we want to indicate also an approach along more familiar lines.

\begin{lemma}\label{L2a.1}
Let $K$ be a $C$-pseudo-cone. Let $F:\partial_i K\to\R$ be nonnegative and  Borel measurable or $\Ha^{n-1}$-integrable. Then
\begin{eqnarray} 
\int_{\partial_i K} F(y)\,\Ha^{n-1}(\D y) =\int_{\Omega_C} F(r_K(v))\frac{\varrho_K^n(v)}{\overline h_K(\alpha_K(v))}\,\D v\label{2a.0}\\
 =\int_{\Omega_C} F(r_K(v))\frac{\varrho_K^{n-1}(v)}{|\langle v,\alpha_K(v)\rangle|}\,\D v.\label{2a.00}
\end{eqnarray}
\end{lemma}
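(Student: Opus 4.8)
The plan is to reduce the two asserted right-hand sides to one another and then to prove the first equality as a change of variables for the radial map, using the area formula. For the reduction, I note that by the Remark following (\ref{2.0}), for $v\in\Omega_C\setminus\eta_K$ the minimum in (\ref{2.0}) is attained at $u=\alpha_K(v)$, so $1/\varrho_K(v)=|\langle v,\alpha_K(v)\rangle|/\overline h_K(\alpha_K(v))$, that is,
\[
 \overline h_K(\alpha_K(v))=\varrho_K(v)\,|\langle v,\alpha_K(v)\rangle|.
\]
Since $\eta_K$ will be seen to be a spherical null set, substituting this identity into (\ref{2a.0}) turns it into (\ref{2a.00}). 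Hence it suffices to establish the first equality, which I obtain from the area formula (cf.\ \cite{Fed69}) applied to the radial map $r_K:\Omega_C\to\partial_i K$, $r_K(v)=\varrho_K(v)v$.

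To prepare the area formula, observe that $r_K$ is a continuous bijection of $\Omega_C$ onto $\partial_i K$ with inverse $y\mapsto y/\|y\|$. Since $o\notin K$, the set $\partial_i K$ is bounded away from the origin, and since $\Omega_C=\Sn\cap\inn C$ is open while the rays from $o$ meet $\partial K$ transversally in $\inn C$ (because $\langle v,\alpha_K(v)\rangle\neq 0$ for $v\in\inn C$), the radial function $\varrho_K$ is locally Lipschitz on $\Omega_C$ and $r_K$ is locally bi-Lipschitz. By Rademacher's theorem $\varrho_K$ is differentiable at $\Ha^{n-1}$-almost every $v\in\Omega_C$, and $\eta_K=r_K^{-1}(\sigma_K)$ is $\Ha^{n-1}$-null because $\Ha^{n-1}(\sigma_K)=0$ and $r_K$ is locally bi-Lipschitz. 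Discarding these two null sets, I work on a set of full measure in $\Omega_C$ where $\varrho_K$ is differentiable and $\alpha_K(v)$ is the unique outer unit normal of $K$ at $r_K(v)$.

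For the Jacobian, fix such a $v$ and an orthonormal basis $w_1,\dots,w_{n-1}$ of $T_v\Sn=v^\perp$, and let $a_i$ be the derivative of $\varrho_K$ at $v$ in direction $w_i$, with $a=(a_1,\dots,a_{n-1})^\top$. Differentiating $r_K$ gives the tangent vectors $a_i v+\varrho_K(v)w_i$, whose Gram matrix is $\varrho_K(v)^2 I+aa^\top$, so that
\[
 Jr_K(v)=\varrho_K(v)^{n-2}\sqrt{\varrho_K(v)^2+\|a\|^2}.
\]
Since $\alpha_K(v)$ is a unit vector orthogonal to all these tangent vectors, expanding it in the basis $\{v,w_1,\dots,w_{n-1}\}$ gives $\langle\alpha_K(v),w_i\rangle=-(a_i/\varrho_K(v))\langle\alpha_K(v),v\rangle$ and hence $\sqrt{\varrho_K(v)^2+\|a\|^2}=\varrho_K(v)/|\langle v,\alpha_K(v)\rangle|$, so $Jr_K(v)=\varrho_K^{n-1}(v)/|\langle v,\alpha_K(v)\rangle|$, the integrand factor in (\ref{2a.00}). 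Applying the area formula on an exhausting sequence of compact sets $\Omega_C^{(k)}\uparrow\Omega_C$, on which $r_K$ is genuinely bi-Lipschitz, and using injectivity yields $\int_{\Omega_C^{(k)}}F(r_K(v))Jr_K(v)\,\D v=\int_{r_K(\Omega_C^{(k)})}F\,\Ha^{n-1}(\D y)$; letting $k\to\infty$ and invoking monotone convergence (for $F\ge 0$; the integrable case follows by splitting into positive and negative parts) gives the first equality.

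The main obstacle is not the Jacobian, which is a short linear-algebra computation, but the regularity and measure-theoretic bookkeeping: establishing that $\varrho_K$ is locally Lipschitz on the open, non-compact set $\Omega_C$ (so that Rademacher's theorem and the area formula apply), controlling the behaviour near $\partial\Omega_C$ through the exhaustion by compacta, and verifying that the exceptional sets—the non-differentiability set of $\varrho_K$ and $\eta_K=r_K^{-1}(\sigma_K)$—are genuinely $\Ha^{n-1}$-null and may be discarded, for which the local bi-Lipschitz property of $r_K$ is the essential tool.
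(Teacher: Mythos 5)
Your proof is correct, but it follows a genuinely different route from the paper's. You prove the lemma as a direct change of variables: you show $r_K$ is locally bi-Lipschitz (via the uniform transversality $|\langle v,u\rangle|\ge a>0$ on compacta, which holds since normal vectors lie in ${\rm cl}\,\Omega_{C^\circ}$), invoke Rademacher's theorem, compute the Jacobian $Jr_K(v)=\varrho_K^{n-1}(v)/|\langle v,\alpha_K(v)\rangle|$ by the Gram-matrix calculation, and apply the area formula on a compact exhaustion of $\Omega_C$; the identity $\overline h_K(\alpha_K(v))=\varrho_K(v)\,|\langle v,\alpha_K(v)\rangle|$ then converts (\ref{2a.00}) into (\ref{2a.0}), exactly as in the paper's last line. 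The paper instead deliberately avoids this geometric-measure-theoretic machinery: it derives the indicator-function case of the formula from two previously established facts about the cone-volume measure $V_K$ --- the spherical-coordinates formula (\ref{3.1n}) and the relation (\ref{3.2n}) coming from \cite[Lem.~9]{Sch18} together with the image-measure definition of $S_{n-1}(K,\cdot)$ --- and then extends to general $F$ by the standard approximation procedure for integrals. Note that the paper explicitly acknowledges your type of argument: it remarks that the proof in \cite[p.~171]{HW20}, via Federer's coarea formula for rectifiable manifolds, ``works for pseudo-cones as well,'' and your area-formula argument is essentially that proof written out, including the regularity bookkeeping (local Lipschitzness of $\varrho_K$, nullity of the non-differentiability set and of $\eta_K=r_K^{-1}(\sigma_K)$ under the locally bi-Lipschitz $r_K$) that you rightly identify as the real work. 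What each approach buys: yours is self-contained modulo Rademacher and the area formula and yields the pointwise Jacobian identity, which has independent interest; the paper's is shorter and stays ``along more familiar lines,'' outsourcing the measure-geometric content to \cite{Sch18} and needing only the extension machine from indicator functions. One small point worth making explicit in your write-up: at a point $v$ where $\varrho_K$ is differentiable, the tangent vectors $a_iv+\varrho_K(v)w_i$ already span an $(n-1)$-dimensional space (the Gram determinant is $\ge\varrho_K^{2(n-1)}>0$), so the outer normal at $r_K(v)$ is automatically unique; thus the two exceptional sets you discard interact harmlessly, as your argument implicitly requires.
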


\begin{proof}
We recall from \cite[Sect. 10]{Sch18} that the cone-volume measure of $K$ is defined by
$$ V_K(\omega):= \Ha^n\left(\bigcup_{y\in\bm x_K(\omega)} [o,y]\right)$$
for $\omega\in\B(\Omega_{C^\circ})$, where $[o,y]$ is the closed segment with endpoints $o$ and $y$, and that the use of spherical coordinates gives
\begin{equation}\label{3.1n}
V_K(\omega) =  \frac{1}{n}\int_{\alpha_K^{-1}(\omega)} \varrho_K^n(v)\,\D v
\end{equation}
(where $\alpha_K^{-1}(\omega):= \{v\in\Omega_C\setminus\eta_K: \alpha_K(v)\in\omega\}$). By \cite[Lem. 9]{Sch18} and the interpretation of $S_{n-1}(K,\cdot)$ as the image measure of $\Ha^{n-1}$, restricted to $\partial_i K$, under the spherical image map we have
\begin{equation}\label{3.2n}
 V_K(\omega) = \frac{1}{n} \int_{\omega} \overline h_K(u)\,S_{n-1}(K,\D u)= \frac{1}{n} \int_{\bm x_K(\omega)}|\langle y,\nu_K(y)\rangle|\,\Ha^{n-1}(\D y).
\end{equation}

Now let $\eta\in\B(\Omega_C)$. Relations (\ref{3.1n}) and (\ref{3.2n}), applied to $\omega = \alpha_K(\eta\setminus \eta_K)$ (so that $\bm x_K(\omega)= r_K(\eta)$, up to a set of $\Ha^{n-1}$-measure zero), show that 
$$ \int_{\partial_i K} {\mathbbm 1}_\eta(r_K^{-1}(y)) |\langle y,\nu_K(y)\rangle|\,\Ha^{n-1}(\D y) = \int_{\Omega_C} {\mathbbm 1}_\eta(v)\varrho_K^n(v)\,\D v.$$
Thus, the relation
$$ \int_{\partial_i K} g(r_K^{-1}(y)) |\langle y,\nu_K(y)\rangle|\,\Ha^{n-1}(\D y) = \int_{\Omega_C} g(v)\varrho_K^n(v)\,\D v$$
holds if $g$ is the characteristic function of a Borel set in $\Omega_C$. By the usual extension procedure for integrals, it holds if $g$ is any nonnegative measurable or integrable function on $\Omega_C$. With $g(v):= F(r_K(v))/\overline h_L(\alpha_K(v))$, we obtain (\ref{2a.0}), and the version (\ref{2a.00}) follows from $\overline h_K(\alpha_K(v))=|\langle \varrho_K(v)v,\alpha_K(v)\rangle|$.
\end{proof}

For expressing integrals with respect to the weighted surface area measure as integrals over parts of $\Omega_C$, we use the following lemma.

\begin{lemma}\label{L3.2n}
Let $\omega\subseteq\Omega_{C^\circ}$ be a nonempty Borel set and $g:\omega\to\R$ be bounded and measurable. Then
$$ \int_\omega g(u)\,S^\Theta_{n-1}(K,\D u) = \int_{\alpha_K^{-1}(\omega)} g(\alpha_K(v))\Theta(r_K(v))\frac{\varrho^{n-1}_K(v)}{|\langle v,\alpha_K(v)\rangle|}\,\D v.$$
\end{lemma}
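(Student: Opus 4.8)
I want to prove Lemma \ref{L3.2n}, which expresses integrals against the weighted surface area measure $S^\Theta_{n-1}(K,\cdot)$ as integrals over the radial domain $\Omega_C$ via the radial Gauss map $\alpha_K$. The natural strategy is to reduce to Lemma \ref{L2a.1}, which already gives the change-of-variables formula from $\partial_i K$ to $\Omega_C$. The definition of the weighted surface area measure is $S^\Theta_{n-1}(K,\omega) = \int_{\nu_K^{-1}(\omega)} \Theta(y)\,\Ha^{n-1}(\D y)$, so the factor $\Theta(y)$ is already sitting inside a Hausdorff-measure integral over a subset of $\partial_i K$. This is exactly the shape to which Lemma \ref{L2a.1}, and specifically the version (\ref{2a.00}) with its factor $\varrho_K^{n-1}(v)/|\langle v,\alpha_K(v)\rangle|$, applies.

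**Key steps.** First I would establish the identity for $g = \mathbbm{1}_{\omega'}$, the indicator of a Borel set $\omega'\subseteq\omega$. In that case the left-hand side is just $S^\Theta_{n-1}(K,\omega') = \int_{\nu_K^{-1}(\omega')}\Theta(y)\,\Ha^{n-1}(\D y)$, an integral over the subset $\nu_K^{-1}(\omega')$ of $\partial_i K$. I would then apply (\ref{2a.00}) to the function $F = \mathbbm{1}_{\nu_K^{-1}(\omega')}\cdot\Theta$ on $\partial_i K$, giving
\begin{equation*}
\int_{\partial_i K}\mathbbm{1}_{\nu_K^{-1}(\omega')}(y)\,\Theta(y)\,\Ha^{n-1}(\D y)
 = \int_{\Omega_C}\mathbbm{1}_{\nu_K^{-1}(\omega')}(r_K(v))\,\Theta(r_K(v))\frac{\varrho_K^{n-1}(v)}{|\langle v,\alpha_K(v)\rangle|}\,\D v.
\end{equation*}
The crucial bookkeeping point is that $r_K(v)\in\nu_K^{-1}(\omega')$ precisely when $\alpha_K(v)=\nu_K(r_K(v))\in\omega'$, i.e. when $v\in\alpha_K^{-1}(\omega')$, since $\alpha_K=\nu_K\circ r_K$ by Definition \ref{D2.3}. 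Hence the indicator $\mathbbm{1}_{\nu_K^{-1}(\omega')}(r_K(v))$ equals $\mathbbm{1}_{\alpha_K^{-1}(\omega')}(v) = \mathbbm{1}_{\omega'}(\alpha_K(v))$ up to the $\Ha^{n-1}$-null exceptional sets $\sigma_K$, $\eta_K$, which carry no mass. This yields exactly the claimed formula for $g=\mathbbm{1}_{\omega'}$. I would then extend to simple functions by linearity and to bounded measurable $g$ by the standard approximation/monotone convergence argument, noting that boundedness of $g$ together with finiteness of $S^\Theta_{n-1}(K,\cdot)$ (Lemma \ref{L3.1}) makes all integrals finite and justifies the limit.

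**Main obstacle.** The only delicate point is the careful handling of the null sets: $\alpha_K$ is defined only on $\Omega_C\setminus\eta_K$, the reverse and forward spherical image maps each fail to be single-valued on sets $\omega_K,\sigma_K$ of $\Ha^{n-1}$-measure zero, and $\varrho_K$ degenerates as one approaches $\partial\Omega_C$. I need to check that the set where $\mathbbm{1}_{\nu_K^{-1}(\omega')}(r_K(v))$ and $\mathbbm{1}_{\omega'}(\alpha_K(v))$ disagree is contained in these null sets, so that it does not affect either integral. Since $S_{n-1}(K,\cdot)$ is defined as the image of $\Ha^{n-1}\!\restriction\!\partial_i K$ under $\nu_K$, and the relevant exceptional sets $\sigma_K$ (in $\partial K$), $\omega_K$ (in $\Omega_{C^\circ}$), and $\eta_K=r_K^{-1}(\sigma_K)$ (in $\Omega_C$) all have vanishing measure by the facts recorded after Definition \ref{D2.2}, this is routine once spelled out; it is essentially the same measure-theoretic cleanup already carried out in the proof of Lemma \ref{L2a.1}. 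I expect the entire proof to be short, resting squarely on Lemma \ref{L2a.1} and the definition of $S^\Theta_{n-1}(K,\cdot)$.
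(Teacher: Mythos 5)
Your proposal is correct and follows essentially the same route as the paper: the paper simply applies Lemma \ref{L2a.1} once with $F(y)=\mathbbm{1}_\omega(\nu_K(y))\,g(\nu_K(y))\,\Theta(y)$ and invokes the transformation formula for image measures, which is exactly the indicator-to-simple-to-bounded-measurable extension you spell out by hand (using $\alpha_K=\nu_K\circ r_K$ and the null sets $\sigma_K$, $\eta_K$ just as you describe). Your version is merely a more explicit unpacking of the same two ingredients, so there is nothing to correct.
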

\begin{proof}
With $F(y):= {\mathbbm 1}_\omega(\nu_K(y))g(\nu_K(y))\Theta(y)$, Lemma \ref{L2a.1} yields
$$ \int_{\partial_i K} {\mathbbm 1}_\omega(\nu_K(y))g(\nu_K(y))\Theta(y)\,\Ha^{n-1}(\D y) = \int_{\alpha_K^{-1}(\omega)} g(\alpha_K(v))\Theta(r_K(v)) \frac{\varrho_K^{n-1}(v)}{|\langle v,\alpha_K(v(\rangle|}\,\D v.$$

By the definition of $S^\Theta_{n-1}(K,\cdot)$ and the transformation formula for integrals, we have
$$ \int_\omega g(u)\,S^\Theta_{n-1}(K,\D u) = \int_{\partial_i K}{\mathbbm 1}_\omega(\nu_K(y)) g(\nu_K(y))\Theta(y)\,\Ha^{n-1}(\D y).$$
These two equations together give the assertion.
\end{proof}

The following lemma shows that weighted surface area measures are weakly continuous on each set $\K(C,\omega)$ with compact $\omega$.

\begin{lemma}\label{L3.3}
Let $\omega\subset\Omega_{C^\circ}$ be a nonempty, compact subset, and let $K_j\in\K(C,\omega)$ for $j\in\N_0$. Then $K_j\to K_0$ as $j\to\infty$ implies the weak convergence $S^\Theta_{n-1}(K_j,\cdot)\stackrel{w}{\to} S^\Theta_{n-1}(K_0,\cdot)$.
\end{lemma}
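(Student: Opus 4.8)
The plan is to test weak convergence against an arbitrary bounded continuous function $g$ on $\Omega_{C^\circ}$ and to reduce the assertion to the dominated convergence theorem after transporting everything to the fixed domain $\Omega_C$ by means of Lemma \ref{L3.2n}. Since each $K_j$ is $C$-determined by the \emph{fixed} compact set $\omega$, every point of $\partial_i K_j$ lies on a hyperplane $H_{K_j}(u)$ with $u\in\omega$, so that $\alpha_{K_j}(v)\in\omega$ for almost all $v\in\Omega_C$; in particular all the measures are concentrated on $\omega$. Applying Lemma \ref{L3.2n} with the Borel set $\Omega_{C^\circ}$ therefore gives, for every $j$,
$$ \int_{\Omega_{C^\circ}} g\,\D S_{n-1}^\Theta(K_j,\cdot) = \int_{\Omega_C} g(\alpha_{K_j}(v))\,\Theta(r_{K_j}(v))\,\frac{\varrho_{K_j}^{n}(v)}{\overline h_{K_j}(\alpha_{K_j}(v))}\,\D v, $$
where I use $\overline h_{K_j}(\alpha_{K_j}(v))=\varrho_{K_j}(v)\,|\langle v,\alpha_{K_j}(v)\rangle|$. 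It thus suffices to pass to the limit $j\to\infty$ under this integral sign.

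Next I would record the estimates that make the integrands behave uniformly in $j$. The convergence $K_j\to K_0$ yields uniform convergence $\overline h_{K_j}\to\overline h_{K_0}$ on the compact set $\omega$, so there are constants $0<m\le M<\infty$ with $m\le\overline h_{K_j}(u)\le M$ for all $u\in\omega$ and all $j$. Consequently every $K_j$ is sandwiched between the two fixed $C$-determined pseudo-cones $K_+:=C\cap\bigcap_{u\in\omega}\{y:\langle y,u\rangle\le -m\}$ and $K_-:=C\cap\bigcap_{u\in\omega}\{y:\langle y,u\rangle\le -M\}$, giving $K_-\subseteq K_j\subseteq K_+$ and hence $\varrho_{K_+}(v)\le\varrho_{K_j}(v)\le\varrho_{K_-}(v)$ for all $v\in\Omega_C$, with the uniform lower bound $\varrho_{K_j}(v)\ge\delta:=\mathrm{dist}(o,K_+)>0$. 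For the pointwise limit I would use that $K_j\to K_0$ forces $\varrho_{K_j}(v)\to\varrho_{K_0}(v)$, hence $r_{K_j}(v)\to r_{K_0}(v)$, for every $v\in\Omega_C$; for $v$ outside the $\Ha^{n-1}$-null set $\eta_{K_0}$ the point $r_{K_0}(v)$ has a unique outer normal, so the standard convexity fact on convergence of normals gives $\alpha_{K_j}(v)\to\alpha_{K_0}(v)$. By continuity of $g$, of $\Theta$, and of the scalar product (the limit $|\langle v,\alpha_{K_0}(v)\rangle|$ being nonzero), the integrand converges for almost all $v$ to the integrand of $K_0$.

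It remains to dominate. Using $|g|\le\|g\|_\infty$, $\overline h_{K_j}(\alpha_{K_j}(v))\ge m$, and $\Theta(r_{K_j}(v))=\varrho_{K_j}(v)^{-q}\Theta(v)$ by homogeneity, the integrand is bounded in absolute value by $(\|g\|_\infty/m)\,\Theta(v)\,\varrho_{K_j}(v)^{\,n-q}$. Since $\varrho_{K_+}\le\varrho_{K_j}\le\varrho_{K_-}$, the factor $\varrho_{K_j}(v)^{n-q}$ is dominated, independently of $j$, by $\varrho_{K_-}(v)^{n-q}+\varrho_{K_+}(v)^{n-q}$ (whichever term is the larger, according to the sign of $n-q$). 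Finally $\int_{\Omega_C}\Theta(v)\varrho_{K_\pm}(v)^{n-q}\,\D v<\infty$: writing this integral back through Lemma \ref{L3.2n} and bounding $\overline h_{K_\pm}(\alpha_{K_\pm}(v))$ from above by its maximum over $\omega$ shows it is at most a constant multiple of the total mass $S_{n-1}^\Theta(K_\pm,\Omega_{C^\circ})$, which is finite by Lemma \ref{L3.1}. The dominated convergence theorem then yields $\int g\,\D S_{n-1}^\Theta(K_j,\cdot)\to\int g\,\D S_{n-1}^\Theta(K_0,\cdot)$, which is the asserted weak convergence.

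The main obstacle is precisely this last domination step. Because $\Omega_C$ is not compact and $\varrho_{K_j}(v)\to\infty$ as $v$ approaches $\partial\Omega_C$, the integrands are unbounded, and one cannot merely bound $g$ and pass to the limit as in the compact case. What overcomes this is the two-sided radial sandwich $\varrho_{K_+}\le\varrho_{K_j}\le\varrho_{K_-}$ by \emph{fixed} $C$-determined pseudo-cones, which both furnishes a $j$-independent majorant and reduces its integrability to the already-established finiteness of the weighted surface area measures of $K_+$ and $K_-$.
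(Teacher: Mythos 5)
Your proof is correct, and its skeleton is the same as the paper's: test weak convergence against a bounded continuous $g$, transport the integral to $\Omega_C$ via Lemma \ref{L3.2n}, establish almost-everywhere pointwise convergence of the transported integrands (radial functions and radial Gauss maps converge off the null set $\eta_{K_0}$), and conclude with dominated convergence. Where you genuinely differ is the domination step. The paper observes that the integrands are \emph{uniformly bounded by a constant}: since the $K_j$ are $C$-determined by the fixed compact $\omega$ and converge, the points $r_{K_j}(v)$ all lie in a fixed compact subset of $C\setminus\{o\}$, so $\Theta(r_{K_j}(v))$ and $\varrho_{K_j}^{n-1}(v)$ are bounded, while $\alpha_{K_j}(v)\in\omega$ forces $|\langle v,\alpha_{K_j}(v)\rangle|\ge c>0$; as $\Omega_C$ has finite spherical measure, a constant majorant finishes the argument. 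You instead build the sandwich $K_-\subseteq K_j\subseteq K_+$ from the uniform bounds $m\le\overline h_{K_j}\le M$ on $\omega$, extract the majorant $(\|g\|_\infty/m)\,\Theta(v)\bigl(\varrho_{K_-}^{\,n-q}(v)+\varrho_{K_+}^{\,n-q}(v)\bigr)$ via homogeneity of $\Theta$, and verify its integrability by routing back through Lemma \ref{L3.2n} and the finiteness Lemma \ref{L3.1}. That is all valid (and your hedge on the sign of $n-q$ is appropriate, since Section \ref{sec3} only assumes $q>n-1$), but it is heavier than needed, and your closing diagnosis of the ``main obstacle'' is actually mistaken: for $K_j\in\K(C,\omega)$ the radial function does \emph{not} tend to infinity as $v$ approaches the boundary of $\Omega_C$. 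Since $C\setminus K$ is bounded for $C$-determined pseudo-cones (every point $rv$ with $0<r<\varrho_{K}(v)$ lies in $C\setminus K$), one has $\varrho_{K_j}\le\varrho_{K_-}\le R$ uniformly on all of $\Omega_C$ — your own sandwich exhibits this bound, and it renders your majorant itself bounded by a constant. So the non-compactness of $\Omega_C$ is harmless here, which is precisely the paper's shortcut; blow-up of $\varrho_K$ near $\partial\Omega_C$ would indeed be the difficulty for general pseudo-cones, and that is one reason the lemma is formulated only for the classes $\K(C,\omega)$.
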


\begin{proof}
Let $g:\Omega_{C^\circ}\to\R$ be bounded and continuous. By Lemma \ref{L3.2n} we have
$$ \int_{\Omega_{C^\circ}} g(u)\,S^\Theta_{n-1}(K_j,\D u)= \int_{\Omega_C} g(\alpha_{K_j}(v))\Theta(r_{K_j}(v))\frac{\varrho_{K_j}^{n-1}(v)}{|\langle v,\alpha_{K_j}(v)\rangle|}\,\D v.$$
For almost all $v\in\Omega_C$ it holds that
$$ g(\alpha_{K_j}(v))\Theta(r_{K_j}(v))\frac{\varrho_{K_j}^{n-1}(v)}{|\langle v,\alpha_{K_j}(v)\rangle|} \to g(\alpha_{K_0}(v))\Theta(r_{K_0}(v))\frac{\varrho_{K_0}^{n-1}(v)}{|\langle v,\alpha_{K_0}(v)\rangle|}$$
as $j\to\infty$. The functions on the left-hand side are uniformly bounded, since $K_j\to K_0$ and $K_j$ is $C$-determined by the compact set $\omega\subset\Omega_{C^\circ}$. Therefore, $\|r_{K_j}(v)\|$ is bounded by a constant. Further, $|\langle v,\alpha_{K_j}(v)\rangle|$ is bounded away from $0$, since the spherical distance from the boundary of $\Omega_{C^\circ}$ is continuous and strictly positive on $\omega$. Now the dominated convergence theorem proves the assertion.
\end{proof}

\section{Weighted covolumes}\label{sec4}

Additionally to $n-1<q$, in this section we also assume that $q<n$. This assumption ensures that
\begin{equation}\label{4.1} 
\int_{B^n\cap C} \Theta(y)\,\Ha^n(\D y)<\infty.
\end{equation}
In fact, since for $y\in C$ we have $\Theta(y)\le c_0\langle y,\mathfrak v\rangle^{-q}$ by (\ref{3.1}) and $\langle y,\mathfrak v\rangle\ge c_1\|y\|$ for $y\in C$ with some constant $c_1$ (because of $-{\mathfrak v}\in{\rm int}\,C^\circ$), (\ref{4.1}) follows from
$$ \int_{B^n\cap C} \Theta(y)\,\Ha^n(\D y) \le c_0c_1^{-q} \int_{B^n} \|y\|^{-q}\,\Ha^n(\D y) = c_0c_1^{-q} \int_0^1 r^{-q} r^{n-1}\omega_n\,\D r<\infty,$$
where $\omega_n$ is the surface area of $\Sn$. By homogeneity and the positivity of $\Theta$ it follows that 
\begin{equation}\label{4.2} 
\int_{C^-(t)}\Theta(y)\,\Ha^n(\D y) <\infty
\end{equation}
for all $t>0$.

For a Borel subset $A\subset C$ we define 
\begin{equation}\label{4.2a} 
\Ha^n_\Theta(A) := \int_{A} \Theta(y)\,\Ha^n(\D y),
\end{equation}
and for a $C$-pseudo-cone $K$ we call
$$ V_\Theta(K):= \Ha^n_\Theta(C\setminus K)$$
the {\em $\Theta$-weighted covolume} of $K$. For general $\Theta$, this can be infinite.

\begin{lemma}\label{L4.1}
If $n-1<q<n$, the $\Theta$-weighted covolume is finite and continuous (in the sense that $K_j\to K$ implies $V_\Theta(K_j)\to V_\Theta(K)$).  
\end{lemma}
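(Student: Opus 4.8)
The plan is to treat finiteness and continuity separately, in both cases comparing $K$ from below with a translate $C+z$ of the recession cone. For finiteness, I would fix $z\in\inn K$; since $C$ is the recession cone of $K$ we have $C+z\subseteq K$, hence $C\setminus K\subseteq C\setminus(C+z)$, and it suffices to bound $\Ha^n_\Theta(C\setminus(C+z))$. I would foliate $\R^n$ by the hyperplanes $\langle\cdot,\mathfrak v\rangle=t$; because $\mathfrak v$ is a unit vector, Fubini gives
\[
\Ha^n_\Theta(C\setminus(C+z))=\int_0^\infty\left(\int_{(C\setminus(C+z))\cap C(t)}\Theta(y)\,\Ha^{n-1}(\D y)\right)\D t .
\]
On $C(t)$ one has $\Theta(y)\le c_0t^{-q}$ by (\ref{3.1}), so the inner integral is at most $c_0t^{-q}\,\Ha^{n-1}\big((C\setminus(C+z))\cap C(t)\big)$. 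Writing $t_z:=\langle z,\mathfrak v\rangle>0$ and $\kappa:=\Ha^{n-1}(C\cap C(1))$, the slice $C\cap C(t)$ is the $t$-dilate of $C\cap C(1)$, while $(C+z)\cap C(t)$ is its translate $z+(C\cap C(t-t_z))$ (empty for $t<t_z$); since $C+z\subseteq C$ the second slice lies in the first, so the slice area equals $t^{n-1}\kappa$ for $t\le t_z$ and $\big(t^{n-1}-(t-t_z)^{n-1}\big)\kappa$ for $t>t_z$.

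This reduces everything to two one-dimensional integrals. Near the origin the integrand behaves like $t^{n-1-q}$, which is integrable exactly because $q<n$ (this is the content of (\ref{4.1})--(\ref{4.2})); near infinity $t^{n-1}-(t-t_z)^{n-1}=O(t^{n-2})$, so the integrand is $O(t^{n-2-q})$, which is integrable exactly because $q>n-1$. Thus $V_\Theta(K)<\infty$ precisely under the hypothesis $n-1<q<n$, which also explains the role of both inequalities.

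For continuity I would use dominated convergence for the indicator functions ${\mathbbm 1}_{C\setminus K_j}$. Suppose $K_j\to K_0$. Fix $w\in\inn K_0$ and a level $t_1$ (as in Definition \ref{D2.0}) with $\langle w,\mathfrak v\rangle<t_1$; then $w\in\inn(K_0\cap C^-(t_1))$, and the Hausdorff convergence $K_j\cap C^-(t_1)\to K_0\cap C^-(t_1)$ gives $w\in K_j$, hence $C+w\subseteq K_j$, for all large $j$. Consequently $C\setminus K_j\subseteq C\setminus(C+w)=:D$ for all large $j$, and $\Ha^n_\Theta(D)<\infty$ by the finiteness part, so ${\mathbbm 1}_{C\setminus K_j}\Theta\le{\mathbbm 1}_D\Theta$ is a fixed integrable majorant. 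The same argument gives pointwise convergence of the indicators off $\partial K_0$: if $y\in\inn K_0$ then $y\in K_j$ eventually, and if $y\in C\setminus K_0$ then, choosing a level above $\langle y,\mathfrak v\rangle$ and using the positive distance of $y$ to the closed set $K_0$, one gets $y\notin K_j$ eventually. Since $\Ha^n(\partial K_0)=0$, we have ${\mathbbm 1}_{C\setminus K_j}\to{\mathbbm 1}_{C\setminus K_0}$ almost everywhere on $C$, and dominated convergence yields $V_\Theta(K_j)\to V_\Theta(K_0)$.

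I expect the main obstacle to be the continuity part, and within it the construction of a single integrable majorant valid for all large $j$: this hinges on showing that a fixed interior point $w$ of the limit $K_0$ eventually lies in every $K_j$, which is where the precise notion of convergence in Definition \ref{D2.0} (Hausdorff convergence of the truncations $K_j\cap C^-(t)$) must be used, together with the fact that $w\in K_j$ forces the whole translated cone $C+w$ into $K_j$. The slice-area estimates and the reduction to one-dimensional integrals are routine once the foliation is set up.
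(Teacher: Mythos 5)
Your proof is correct, and it splits along an interesting line: the finiteness half is essentially the paper's own argument, while the continuity half takes a genuinely different route. For finiteness, Schneider likewise fixes a translated cone $C_z=C+z\subseteq K$, foliates by the level sets $C(\tau)$, bounds $\Theta(y)\le c_0\langle y,\mathfrak v\rangle^{-q}$ via (\ref{3.1}), and reduces to the one-dimensional integral $\int_t^\infty \tau^{-q}\bigl[\tau^{n-1}-(\tau-t)^{n-1}\bigr]\D\tau$, with $q<n$ entering through integrability near $o$ (his (\ref{4.1})--(\ref{4.2})) and $q>n-1$ through the tail --- exactly your two regimes. For continuity, the paper argues quantitatively: it keeps the uniform tail bound coming from the common inner cone (any $L\in ps(C)$ with $d_H(K\cap C^-(s),L\cap C^-(s))\le\delta$ contains $z$, so both tails lie in $(C\setminus C_z)\setminus C^-(s)$, of small $\Ha^n_\Theta$-measure), and on the compact truncation $C^-(s)$ it bounds $\Theta$ by a constant $b$ (after pushing $K$ and $L$ off $C^-(s_0)$) and controls the volume of the symmetric difference by $a\delta$ via the Steiner formula. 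You instead run dominated convergence: the same translated-cone trick yields the fixed integrable majorant ${\mathbbm 1}_{C\setminus(C+w)}\Theta$ --- your key step, that $w\in K_j$ for all large $j$, is the standard fact that an interior point of a Hausdorff limit of convex bodies belongs to all members of the tail of the sequence (argue via support functions: $h_{K_j\cap C^-(t_1)}\ge h_{K_0\cap C^-(t_1)}-\varepsilon_j\ge\langle w,\cdot\rangle+\rho-\varepsilon_j$), after which $w+C\subseteq K_j$ because $C=\rec K_j$ --- and the indicators converge pointwise off the Lebesgue-null set $\partial K_0$. Your route is shorter and avoids the Steiner formula entirely; what the paper's route buys is a uniform, quantitative conclusion (an explicit $\varepsilon$-$\delta$ statement valid for every $L$ close to $K$ in the truncated Hausdorff distance, not merely along a sequence). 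Since the lemma only asserts sequential continuity, your soft argument fully suffices, provided you also note, as you implicitly do, that $t_1$ must be taken at least as large as the $t_0$ from Definition \ref{D2.0} so that the Hausdorff convergence of the truncations is available at that level.
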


\begin{proof}
Let $K\in ps(C)$ be a $C$-pseudo-cone. We choose $t>0$ so that $z:=t\mathfrak v\in K$ and set $C_z:= C+z$. Since ${\mathfrak v}\in{\rm int}\,C$ and $C$ is the recession cone of $K$, the number $t$ exists and we have $C_z\subseteq K$. Because of (\ref{4.2}), it suffices to prove that $\Ha^n_\Theta((C\setminus K)\setminus C^-(t))<\infty$. Using (\ref{3.1}), we get
\begin{eqnarray*} 
\Ha^n_\Theta((C\setminus K)\setminus C^-(t)) &=& \int_t^\infty\int_{(C\setminus K)\cap C(\tau)} \Theta(y)\,\Ha^{n-1}(\D y)\,\D\tau\\
&\le& c_0 \int_t^\infty\int_{(C\setminus K)\cap C(\tau)} \tau^{-q}\,\Ha^{n-1}(\D y)\,\D\tau\\
&\le& c_0 \int_t^\infty \tau^{-q}\left[\Ha^{n-1}(C\cap C(\tau)) -\Ha^{n-1}(C_z\cap C(\tau))\right]\,\D\tau.
\end{eqnarray*}
Writing $\Ha^{n-1}(C\cap C(1))=:c_2$, we have $\Ha^{n-1}(C\cap C(\tau))=\tau^{n-1}c_2$ and $ \Ha^{n-1}(C_z\cap C(\tau)) = (\tau-t)^{n-1}c_2$, hence
$$ \Ha^n_\Theta((C\setminus K)\setminus C^-(t)) \le c_0c_2\int_t^\infty \tau^{-q} [\tau^{n-1}-(\tau-t)^{n-1}]\,\D\tau<\infty,$$
showing the finiteness of the $\Theta$-weighted covolume on $C$-pseudo-cones.

To prove the continuity, let $K, K_j$, $j\in\N$, be $C$-pseudo-cones with $K_j\to K$ as $j\to \infty$. We choose $t,z,C_z$ as above, but now such that $z\in{\rm int}\,K$. Let $\varepsilon>0$ be given. It follows from the argument above that there exists a number $s>0$ such that $z\in {\rm int}\,C^-(s)$ and 
\begin{equation}\label{4.3} 
\Ha^n_\Theta((C\setminus C_z)\setminus C^-(s))<\varepsilon/2.\end{equation}
Then we choose a number $s_0>0$ with $K\cap C^-(s_0)=\emptyset$ and a constant $b$ with
$$ \Theta(y)\le b\quad\mbox{for}\quad y\in C^-(s)\setminus C^-(s_0).$$

We remark that for convex bodies $A,B\subset C^-(s) $ with $d_H(A,B)\le\delta$ (for some $\delta>0$), we have $A\setminus B\subseteq (B+\delta B^n)\setminus B$ and hence $V_n(A\setminus B)\le V_n(B+\delta B^n)-V_n(B)$. Using the Steiner formula and the monotonicity of its coefficients, we obtain $V_n(A\setminus B)\le \delta a$ with a constant $a$ depending only on $C^-(s)$.

Now we choose $\delta>0$ such that, for $L\in ps(C)$, the inequality
\begin{equation}\label{4.4}
d_H(K\cap C^-(s),L\cap C^-(s))\le\delta
\end{equation} 
implies that $z\in L$, $L\cap C^-(s_0)=\emptyset$ and $4ab\delta\le\varepsilon$. 

Let $L\in ps(C)$ be such that (\ref{4.4}) is satisfied. Then
$$ V_n((K\setminus L)\cap C^-(s)) \le \delta a,\quad V_n((L\setminus K)\cap C^-(s)) \le \delta a$$
and therefore
\begin{eqnarray*}
&& |\Ha^n_\Theta((C\setminus K)\cap C^-(s))-\Ha^n_\Theta((C\setminus L)\cap C^-(s))|\\
&& \le \left|\int_{(K\setminus L)\cap C^-(s)} \Theta(y)\,\Ha^n(\D y) \right| + \left|\int_{(L\setminus K)\cap C^-(s)} \Theta(y)\,\Ha^n(\D y) \right|\\
&& \le 2ab\delta \le\varepsilon/2.
\end{eqnarray*}
Together with (\ref{4.3}) this shows that
$$|\Ha^n_\Theta(C\setminus K)-\Ha^n_\Theta(C\setminus L)|\le \varepsilon.$$ 
There is a number $j_0$ such that for $j\ge j_0$ we have $d_H(K\cap C^-(s),K_j\cap C^-(s))\le \delta$. For $j\ge j_0$ we then have $|V_\Theta(K)-V_\Theta(K_j)|\le \varepsilon$.
\end{proof}

\section{Wulff shapes}\label{sec5}

We use Wulff shapes in given cones, as in \cite{Sch18}. Let $\emptyset\not=\omega\subset\Omega_{C^\circ}$ be a compact set and $h:\omega\to\R$ a positive continuous function. We define
$$ [h]:= C\cap\bigcap_{u\in\omega} \{y\in\R^n: \langle y,u\rangle\le -h(u)\}$$
and call this set the {\em Wulff shape} associated with $(C,\omega,h)$. It is nonempty, since $h$ is bounded. We point out that the Wulff shape associated with $(C,\omega,h)$ belongs to $\K(C,\omega)$. This follows immediately from the definition. In particular, $[h]$ is a $C$-pseudo-cone.

Relation (2.24) of \cite{HLYZ16} takes a different form. For $v\in\Omega_C$,
\begin{equation}\label{HLYZ2.24a}
\frac{1}{\varrho_{[h]}(v)}=\min_{u\in\omega} \frac{|\langle v,u\rangle|}{h(u)},
\end{equation}
where the minimum is attained. This follows from
\begin{eqnarray*}
\varrho_{[h]}(v) &=& \min\{r>0: rv\in[h]\}\\
&=& \min\{r>0: \langle rv,u\rangle \le -h(u)\mbox{ for all }u\in\omega\}\\
&=& \min\{r>0: \langle rv,u\rangle(-h(u))^{-1}\ge 1\mbox{ for all }u\in\omega\}\\
&=& \min\{r>0: r\min_{u\in\omega}|\langle v,u\rangle|h(u)^{-1}\ge 1\}\\
&=& \frac{1}{\min_{u\in\omega} |\langle v,u\rangle| h(u)^{-1}}.
\end{eqnarray*}

The following is Lemma 5 in \cite{Sch18}.

\begin{lemma}\label{L5.1}
If $h_i$, $i\in\N_0$, are positive continuous functions on $\omega$ and $h_i\to h_0$ uniformly on $\omega$, as $i\to\infty$, then $[h_i]\to [h_0]$.
\end{lemma}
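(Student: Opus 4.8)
The plan is to translate the asserted convergence $[h_i]\to[h_0]$, in the sense of Definition \ref{D2.0}, into the Hausdorff convergence of the truncated convex bodies $L_i:=[h_i]\cap C^-(t)$ (for $i\in\N_0$ and each sufficiently large $t$), and to establish the latter by the two-part characterization of Hausdorff convergence of convex bodies, Theorem 1.8.8 of \cite{Sch14}, precisely as in the proof of Lemma \ref{L2.2}. The identity (\ref{HLYZ2.24a}) would give an alternative route through uniform convergence of the radial functions, but the direct verification of the two conditions is cleaner and sidesteps a delicate analysis of the truncated boundaries.

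First I would secure a single truncation level that works for all indices. Since $h_i\to h_0$ uniformly on the compact set $\omega$, the numbers $\max_\omega h_i$ are bounded by a constant $H$. The definition of the Wulff shape immediately yields the monotonicity $h\le h'\Rightarrow[h]\supseteq[h']$, so the fixed $C$-pseudo-cone $[H]$ (associated with the constant function $H$) satisfies $[H]\subseteq[h_i]$ for every $i\in\N_0$. As $[H]\in\K(C,\omega)$, the set $C\setminus[H]$ is bounded, so there is a $t_0>0$ with $[H]\cap C^-(t_0)\neq\emptyset$; enlarging $t_0$ if necessary, I also fix a point $x_0\in\inn[h_0]$ with $\langle x_0,\mathfrak v\rangle<t_0$. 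Then for every $t\ge t_0$ the sets $L_i$ are nonempty, compact and convex, and it remains to prove $L_i\to L_0$ in the Hausdorff metric.

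Fix $t\ge t_0$. Condition (b) of Theorem 1.8.8 is immediate: if $x_{i_j}\in L_{i_j}$ and $x_{i_j}\to x$, then $x\in C^-(t)$ and $x\in C$ by closedness, while $\langle x_{i_j},u\rangle\le -h_{i_j}(u)$ passes in the limit to $\langle x,u\rangle\le -h_0(u)$ for every $u\in\omega$ (using $h_{i_j}\to h_0$), so $x\in[h_0]\cap C^-(t)=L_0$. For condition (a), I would use that $\inn[h_0]=\{y\in\inn C:\langle y,u\rangle<-h_0(u)\ \forall u\in\omega\}$. Given $x\in L_0$, the points $x_s:=(1-s)x+sx_0$ satisfy $x_s\in\inn[h_0]$ and $\langle x_s,\mathfrak v\rangle<t$ for $s\in(0,1]$, and $x_s\to x$ as $s\to0$. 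For fixed $s$ the number $\beta_s:=\min_{u\in\omega}(-h_0(u)-\langle x_s,u\rangle)$ is positive by compactness of $\omega$, and as soon as $\|h_i-h_0\|_\infty<\beta_s$ one gets $\langle x_s,u\rangle\le -h_i(u)$ for all $u\in\omega$, hence $x_s\in[h_i]$ and thus $x_s\in L_i$. A diagonal choice $s=s_i\to0$ then produces $x_i\in L_i$ with $x_i\to x$, which is condition (a). With both conditions verified, $L_i\to L_0$, and since this holds for all $t\ge t_0$ with the common $t_0$ above, Definition \ref{D2.0} gives $[h_i]\to[h_0]$.

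The main obstacle is condition (a), and specifically the boundary points of $L_0$ --- both those lying on the ``slanted'' boundary $\partial[h_0]\cap\inn C$ and those on the top cap $[h_0]\cap C(t)$ --- since there $x$ need not belong to the smaller bodies $[h_i]$ when $h_i>h_0$. Pulling $x$ slightly toward the fixed interior point $x_0$ below level $t$ resolves both difficulties at once: it moves $x$ strictly into $\inn[h_0]$ and strictly below the cap, so that the uniform convergence $h_i\to h_0$ forces the perturbed point into $L_i$. The only other point requiring care is the uniform nonemptiness of the truncations, which is handled above by the monotone comparison with the single Wulff shape $[H]$.
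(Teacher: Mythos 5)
Your proof is correct. Note that the paper does not actually prove this lemma: it imports it verbatim from \cite{Sch18} (Lemma 5 there), so there is no internal proof to match against; what your argument most closely parallels is the paper's own proof of Lemma \ref{L2.2}, which verifies conditions (a) and (b) of \cite[Thm.\ 1.8.8]{Sch14} for the truncations $K\cap C^-(t)$. Within that scheme you make two choices that differ from the paper's Lemma \ref{L2.2} argument and that both check out. First, for the common truncation level $t_0$ you use the monotonicity $h\le h'\Rightarrow[h]\supseteq[h']$ to sandwich all $[h_i]$ above the single Wulff shape $[H]$ of the constant bound $H$; this is clean and gives the uniform nonemptiness required by Definition \ref{D2.0} at once. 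Second, for condition (a) you replace the radial-ray construction with the angle estimate $|\cos\alpha|\ge a$ (which the paper uses in Lemma \ref{L2.2}, exploiting the positive spherical distance of $\omega$ from $\partial\,\Omega_{C^\circ}$) by a qualitative interior-point perturbation: pulling $x$ toward a fixed $x_0\in{\rm int}\,[h_0]$ below level $t_0$ handles the slanted boundary, the points on $\partial C$, and the top cap $C(t)$ simultaneously, at the cost of a diagonal extraction instead of an explicit rate. The supporting facts you use are all sound: the identity ${\rm int}\,[h_0]=\{y\in{\rm int}\,C:\langle y,u\rangle<-h_0(u)\ \forall u\in\omega\}$ holds because the right-hand set is open (compactness of $\omega$ plus continuity of $h_0$) and any point satisfying $\langle y,u_1\rangle=-h_0(u_1)$ lies on a hyperplane bounding a halfspace containing $[h_0]$; the positivity of $\beta_s$ again uses compactness of $\omega$; and condition (b) needs only pointwise convergence $h_{i_j}(u)\to h_0(u)$. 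So the argument is a complete, self-contained substitute for the citation, slightly softer than a quantitative sandwich $[h_0+\varepsilon]\subseteq[h_i]\subseteq[h_0-\varepsilon]$ (which formula (\ref{HLYZ2.24a}) would also support) but requiring no estimates at all.
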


Let $\omega$ and $h$ be as above, and let $h:\omega\to(0,\infty)$ and $f:\omega\to\R$ be continuous and $\delta>0$. Define $h_t$ by
$$ \log h_t(u) := \log h(u)+tf(u)+o(t,u),\quad u\in\omega,$$
for $t\in (-\delta,\delta)$, where the function $o(t,\cdot):\omega\to\R$ is continuous and satisfies $o(0,\cdot)=0$ and $\lim_{t\to 0} o(t,\cdot)/t=0$ uniformly on $\omega$. The number $\delta$ will later be chosen appropriately. Let $[h_t]$ be the Wulff shape associated with $(C,\omega,h_t)$. We call $[h_t]$ a logarithmic family of Wulff shapes associated with $(C,\omega,h,f)$.

\section{A variational lemma}\label{sec6}

Aleksandrov's variational lemma, with its original proof, is reproduced as Lemma 7.5.3 in \cite{Sch14}. This is an essential part of the classical approach to Minkowski's existence theorem. Its proof makes use of Minkowski's inequalities for mixed volumes. It was carried over to pseudo-cones in \cite{Sch18}, but the method does not permit other desired generalizations. Fortunately, Huang, Lutwak, Yang and Zhang \cite{HLYZ16} found a way to prove this lemma in a different way, and this approach also allowed to treat weighted surface area measures of convex bodies, see Kryvonos and Langharst \cite{KL23}. We need a counterpart of this approach for pseudo-cones. In the following, we adapt the proofs of \cite{HLYZ16} and \cite{KL23} to this case. 

First we carry over Lemma 4.3 of \cite{HLYZ16} to pseudo-cones. We emphasize that the main ideas are taken from \cite{HLYZ16}, but our proof would be different even for convex bodies, since we avoid the use of convexifications and polarity. In \cite{HLYZ16} these are employed to obtain a whole series of variational lemmas.

In the following, we restrict ourselves to compact sets $\omega\subset\Omega_{C^\circ}$ and then to $C$-pseudo-cones that are determined by $\omega$. The reason is that we need some compactness arguments. Let $K\in\K(C,\omega)$, and recall (from Definition \ref{D2.3}) that $\eta_K$ denotes the set of all $v\in\Omega_C$ for which the outer unit normal vector of $K$ at $\varrho_{K}(v)v$ is not unique. If $v\in\Omega_C\setminus\eta_K$, then the supporting hyperplane of $K$ at $\varrho_K(v)v$ is unique, hence its outer unit normal vector belongs to $\omega$. In particular, the radial Gauss map $\alpha_K$ of $K\in\K(C,\omega)$ maps $\Omega_C\setminus\eta_K$ into $\omega$. This will be important.

\begin{lemma}\label{HLYZL4.3}
Let $\omega\subset \Omega_{C^\circ}$ be nonempty and compact, and let $h_0:\omega\to (0,\infty)$ and $f:\omega\to \R$ be continuous. If $[h_t]$ is a logarithmic family of Wulff shapes associated with $(C,\omega,h_0,f)$, where
$$ \log h_t(u) = \log h_0(u)+tf(u)+o(t,u)$$
for $u\in\omega$, then for almost all $v\in\Omega_C$,
\begin{equation}\label{V0} 
\lim_{t\to 0} \frac{\log \varrho_{[h_t]}(v) -\log\varrho_{[h_0]}(v)}{t}= f(\alpha_{[h_0]}(v)).
\end{equation}
\end{lemma}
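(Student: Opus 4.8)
The plan is to derive the limit relation from the pointwise formula \eqref{HLYZ2.24a} for the radial function of a Wulff shape, using the fact that for a $C$-pseudo-cone $K\in\K(C,\omega)$ the radial Gauss map $\alpha_{[h_0]}$ is defined for almost all $v\in\Omega_C$ (since $\Ha^{n-1}(\eta_{[h_0]})=0$) and takes values in $\omega$. Fix such a $v\in\Omega_C\setminus\eta_{[h_0]}$, and abbreviate $\varrho_t:=\varrho_{[h_t]}(v)$ and $u_0:=\alpha_{[h_0]}(v)\in\omega$. By \eqref{HLYZ2.24a} we have
$$ -\log\varrho_t = \log\left(\min_{u\in\omega}\frac{|\langle v,u\rangle|}{h_t(u)}\right) = \min_{u\in\omega}\left(\log|\langle v,u\rangle| - \log h_t(u)\right), $$
so that, writing $G_t(u):=\log|\langle v,u\rangle|-\log h_t(u)$, the quantity $-\log\varrho_t$ is the minimum over the compact set $\omega$ of the continuous function $G_t$, and $-\log\varrho_0$ is attained at $u_0$. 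The strategy is the standard envelope/Danskin argument: the minimum of a family of functions depending smoothly on a parameter has a one-sided derivative controlled by the derivative of the member active at the optimum, and the two-sided derivative exists precisely when the minimizer is unique, which is exactly the content of the Remark after \eqref{2.0} (the minimizer is unique iff the supporting normal at $\varrho_{[h_0]}(v)v$ is unique, i.e.\ iff $v\notin\eta_{[h_0]}$).

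Concretely, I would bound the difference quotient from two sides. For the upper bound, use $u_0$ as a (suboptimal) test point in the minimum defining $-\log\varrho_t$, giving $-\log\varrho_t\le G_t(u_0)$, while $-\log\varrho_0=G_0(u_0)$; since $\log h_t(u_0)=\log h_0(u_0)+tf(u_0)+o(t,u_0)$, this yields
$$ \frac{\log\varrho_t-\log\varrho_0}{t}\ge \frac{f(u_0)\,t+o(t,u_0)}{t}\quad\text{for }t>0 $$
(with the inequality reversed for $t<0$), whose limit is $f(u_0)=f(\alpha_{[h_0]}(v))$. For the matching lower bound, let $u_t\in\omega$ be a point where $G_t$ attains its minimum, so $-\log\varrho_t=G_t(u_t)$ and $-\log\varrho_0\le G_0(u_t)$; subtracting gives an estimate in the opposite direction involving $\log h_t(u_t)-\log h_0(u_t)=tf(u_t)+o(t,u_t)$. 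To conclude one needs $u_t\to u_0$ as $t\to 0$: by compactness of $\omega$ any subsequential limit $u_*$ of $(u_t)$ is, by uniform convergence of $h_t\to h_0$ (which is exactly the hypothesis that $o(t,\cdot)/t\to 0$ uniformly, in particular $o(t,\cdot)\to 0$ uniformly), a minimizer of $G_0$, hence $u_*=u_0$ by uniqueness of the minimizer for $v\notin\eta_{[h_0]}$. Combining the two bounds gives \eqref{V0}.

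The main obstacle is the lower bound, and specifically the justification that the minimizers $u_t$ converge to the unique minimizer $u_0$. This is where the restriction to $v\notin\eta_{[h_0]}$ is essential: without uniqueness of the minimizer the two-sided limit need not exist, so I would foreground the Remark after \eqref{2.0} identifying the minimizer of $|\langle v,u\rangle|/\overline h_K(u)$ with the outer normal at $\varrho_K(v)v$, together with $\Ha^{n-1}(\eta_{[h_0]})=0$, to secure the ``almost all $v$'' qualifier. A secondary technical point is that $|\langle v,u\rangle|$ must stay bounded away from $0$ on $\omega$ so that $\log|\langle v,u\rangle|$ and $G_t$ are genuinely continuous and the minima behave well; this holds because $\omega$ is a compact subset of $\Omega_{C^\circ}$ at positive spherical distance from $\bd\,\Omega_{C^\circ}$, so $\langle v,u\rangle$ is uniformly negative for $v\in\Omega_C$, $u\in\omega$. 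Once these points are in place the argument is the routine one-sided-derivative estimate sketched above.
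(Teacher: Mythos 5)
Your proposal is correct and takes essentially the same route as the paper: both sandwich the logarithmic difference quotient between test-point evaluations at the fixed normal $u_0=\alpha_{[h_0]}(v)$ and the $t$-dependent minimizer $u_t$ from \eqref{HLYZ2.24a}, and both establish $u_t\to u_0$ by compactness of $\omega$ plus the Remark after \eqref{2.0} and uniqueness of the normal for $v\notin\eta_{[h_0]}$. The one step your sketch leaves implicit --- that a minimizer of $G_0$, which involves $h_0$ on $\omega$, is in fact the outer normal at $\varrho_{[h_0]}(v)v$, even though the Remark concerns $|\langle v,u\rangle|/\overline h_{[h_0]}(u)$ on ${\rm cl}\,\Omega_{C^\circ}$ --- is closed exactly as in the paper via $h_0\le\overline h_{[h_0]}$ on $\omega$ together with $\varrho_{[h_0]}(v)v\in\partial[h_0]$, which forces equality in \eqref{2.0}.
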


\begin{proof}
We fix a vector $v\in\Omega_C \setminus \eta_{[h_0]}$. The set $\eta_{[h_0]}$ has spherical Lebesgue measure zero.

By formula (\ref{HLYZ2.24a}) (where the minimum is attained if $v\in\Omega_C$), there exists $u_t\in\omega$ such that
\begin{equation}\label{V2} 
\frac{1}{\varrho_{[h_t]}(v)} = \frac{|\langle v,u_t\rangle|}{h_t(u_t)} \quad\mbox{and} \quad \frac{1}{\varrho_{[h_t]}(v)} \le \frac{|\langle v,u\rangle|}{h_t(u)} \quad\mbox{for all } u\in\omega.
\end{equation}
By (\ref{V2}), for $t=0$,
$$ |\langle \varrho_{[h_0]}(v)v,u_0\rangle|= h_0(u_0).$$
Since $[h_0]$ is the Wulff shape associated with $(C,\omega,h_0)$, we have $h_0(u_0)\le \overline h_{[h_0]}(u_0)$, hence
$$ |\langle \varrho_{[h_0]}(v)v,u_0\rangle| \le \overline h_{[h_0]}(u_0) = \min_{x\in\partial[h_0]} |\langle x,u_0\rangle|.$$
From $\varrho_{[h_0]}(v)v\in\partial [h_0]$ it follows that the equality sign holds here, hence $u_0$ is an (in fact the unique) outer unit normal vector of $[h_0]$ at $\varrho_{[h_0]}(v)v$, thus
\begin{equation}\label{V4}
u_0 = \alpha_{[h_0]}(v).
\end{equation} 

We state that
\begin{equation}\label{V5}
\lim_{t\to 0} u_t=u_0.
\end{equation}
We argue as in \cite{HLYZ16} (proof of (4.8)), with the necessary changes. Since $\omega$ is compact, there exists a sequence $(u_{t_k})_{k\in\N}$ with $t_k\to 0$ that converges to some $u'\in\omega$. It suffices to prove that every convergent sequence $(u_{t_k})_{k\in\N}$ with $t_k\to 0$ converges to $u_0$. Suppose that $t_k\to 0$ as $k\to\infty$ and
$$ \lim_{k\to\infty} u_{t_k} = u'\in\omega.$$
Since $h_{t_k}\to h_0$ uniformly on $\omega$ and since $h_0>0$ on $\omega$, we get
\begin{equation}\label{V6}
\frac{1}{\varrho_{[h_{t_k}]}(v)} = \frac{|\langle v,u_{t_k}\rangle|}{h_{t_k}(u_{t_k})} \to \frac{|\langle v, u'\rangle|}{h_0(u')}.
\end{equation}
From Lemma \ref{L5.1} it follows that also
$$ \frac{1}{\varrho_{[h_{t_k}]}(v)} \to \frac{1}{\varrho_{[h_o]}(v)},$$
hence
$$ \frac{1}{\varrho_{[h_o]}(v)} = \frac{|\langle v, u'\rangle|}{h_0(u')}.$$
As noted above, we have $h_0(u)\le\overline h_{[h_0]}(u)$, hence
$$ \frac{1}{\varrho_{[h_o]}(v)} \ge \frac{|\langle v, u'\rangle|}{\overline h_{[h_0]}(u')} \ge\frac{1}{\varrho_{[h_o]}(v)},$$
the latter by (\ref{2.0}). Since here the equality sign holds, the remark after (\ref{2.0}) shows that $u'$ is an outer unit normal vector of $[h_0]$ at $\varrho_{[h_0]}(v)v$ and hence coincides with $u_0$. This proves (\ref{V5}).

By (\ref{2.0}) we have
\begin{equation}\label{V7}
\frac{1}{\varrho_{[h_0]}(v)} \le \frac{|\langle v, u_t\rangle|}{\overline h_{[h_0]}(u_t)}.
\end{equation}

Using (\ref{V2}), (\ref{V7}), the inequality $\overline h_{[h_0]}\ge h_0$, and the definition of $h_t$, we obtain
\begin{eqnarray}
\log \varrho_{[h_t]}(v) - \log \varrho_{[h_0]}(v) &=& \log h_t(u_t) -\log |\langle v,u_t\rangle| -\log \varrho_{[h_0]}(v)\nonumber\\
&\le & \log h_t(u_t) -\log\overline h_{[h_0]}(u_t)\nonumber\\
&\le& \log h_t(u_t) - \log h_0(u_t)\nonumber\\
&=& tf(u_t) + o(t,u_t).\label{V8}
\end{eqnarray}

Using (\ref{V2}) (for $t=0$) and then (\ref{HLYZ2.24a}), we get 
\begin{eqnarray}
\log \varrho_{[h_t]}(v) - \log \varrho_{[h_0]}(v) &=& \log \varrho_{[h_t]}(v) -\log h_0(u_0) + \log |\langle v,u_0\rangle|\nonumber\\
&\ge & \log h_t(u_0) - \log h_0(u_0)\nonumber\\
&=& tf(u_0)+o(t,u_0).\label{V9}
\end{eqnarray}

From (\ref{V8}) and (\ref{V9}) we see that
\begin{equation}\label{V10}
 tf(u_0)+o(t,u_0) \le \log \varrho_{[h_t]}(v) - \log \varrho_{[h_0]}(v) \le tf(u_t) + o(t,u_t).
\end{equation}
These inequalities, together with the continuity of the function $f$ and the limit relation (\ref{V5}), yield
$$ \lim_{t\to 0} \frac{\log \varrho_{[h_t]}(v) - \log \varrho_{[h_0]}(v)}{t} = f(u_0) = f(\alpha_{[h_0]}(v))$$
and thus the assertion (\ref{V0}).
\end{proof}

We adapt Lemma 2.6 of \cite{KL23} to $C$-determined pseudo-cones. 

\begin{lemma}\label{KLL2.6}
Let $\omega\subset\Omega_{C^\circ}$ be nonempty and compact, let $K\in\K(C,\omega)$. Let $f:\omega\to \R$ be continuous. There is a  constant $\delta> 0$ such that the function $h_t$ defined by
$$ h_t(u):= \overline h_K(u) +tf(u), \quad u\in\omega,$$
is positive for $|t|\le \delta$. Let $[h_t]$ be the Wulff shape associated with $(C,\omega,h_t)$, for $|t|\le\delta$.\\[1mm]
\noindent $\rm (a)$ For almost all $v\in\Omega_C$,
$$ \frac{\D \varrho_{[h_t]}(v)}{\D t} \Big|_{t=0} = \lim_{t\to 0} \frac{\varrho_{[h_t]}(v) - \varrho_K(v)}{t} = \frac{f(\alpha_K(v))}{\overline h_K(\alpha_K(v))}\varrho_K(v).$$
\noindent $\rm (b)$ There is a constant $M$ with
$$ |\varrho_{[h_t]}(v) - \varrho_K(v)|\le M|t|$$
for all $v\in\Omega_C$ and all $|t|\le \delta$.
\end{lemma}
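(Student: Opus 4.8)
The plan is to identify $[h_0]=K$ and then to treat the two parts by different mechanisms: part (a) by reducing the additive family $h_t$ to a logarithmic family and invoking Lemma \ref{HLYZL4.3}, and part (b) by a direct estimate based on formula (\ref{HLYZ2.24a}). Since $K\in\K(C,\omega)$, the Wulff shape associated with $(C,\omega,\overline h_K|_\omega)$ is exactly $K$, so $[h_0]=K$ for $h_0:=\overline h_K|_\omega$; in particular $\varrho_{[h_0]}=\varrho_K$ and $\alpha_{[h_0]}=\alpha_K$. Because $\omega$ is compact and $\overline h_K$ is continuous and positive on $\omega$, there are constants $0<c\le H$ with $c\le\overline h_K(u)\le H$ for $u\in\omega$, and $|f|\le F$ on $\omega$; choosing $\delta>0$ with $\delta F\le c/2$ guarantees $c/2\le h_t(u)\le H+\delta F$ for all $u\in\omega$ and $|t|\le\delta$, so each $[h_t]$ is a well-defined $C$-pseudo-cone in $\K(C,\omega)$.

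For (a), I would write
\[
 \log h_t(u)=\log\overline h_K(u)+\log\!\left(1+t\,\frac{f(u)}{\overline h_K(u)}\right)=\log\overline h_K(u)+t\,\frac{f(u)}{\overline h_K(u)}+o(t,u),
\]
where $o(t,u):=\log\!\left(1+t f(u)/\overline h_K(u)\right)-t f(u)/\overline h_K(u)$. Since $\overline h_K\ge c$ and $|f|\le F$ on $\omega$, one has $o(t,u)=O(t^2)$ uniformly on $\omega$, so $o(t,\cdot)$ is continuous, $o(0,\cdot)=0$, and $o(t,\cdot)/t\to0$ uniformly. Thus $[h_t]$ is a logarithmic family associated with $(C,\omega,\overline h_K,g)$ for $g:=f/\overline h_K$, and Lemma \ref{HLYZL4.3} gives, for almost all $v\in\Omega_C$,
\[
 \lim_{t\to0}\frac{\log\varrho_{[h_t]}(v)-\log\varrho_K(v)}{t}=g(\alpha_K(v))=\frac{f(\alpha_K(v))}{\overline h_K(\alpha_K(v))}.
\]
Since $\varrho_{[h_t]}(v)\to\varrho_K(v)$ as $t\to0$ (by Lemma \ref{L5.1}), the chain rule converts this logarithmic derivative into the ordinary one upon multiplication by $\varrho_K(v)$, which is exactly the claimed formula in (a).

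For (b), I would start from (\ref{HLYZ2.24a}), giving $\varrho_{[h_t]}(v)^{-1}=\min_{u\in\omega}|\langle v,u\rangle|/h_t(u)$ and $\varrho_K(v)^{-1}=\min_{u\in\omega}|\langle v,u\rangle|/\overline h_K(u)$. Using $|\min_u a(u)-\min_u b(u)|\le\max_u|a(u)-b(u)|$ together with $|\langle v,u\rangle|\le1$, $|h_t(u)-\overline h_K(u)|=|t|\,|f(u)|\le F|t|$ and $h_t(u)\overline h_K(u)\ge c^2/2$, I obtain
\[
 \left|\frac{1}{\varrho_{[h_t]}(v)}-\frac{1}{\varrho_K(v)}\right|\le\max_{u\in\omega}\frac{|\langle v,u\rangle|\,|h_t(u)-\overline h_K(u)|}{h_t(u)\,\overline h_K(u)}\le\frac{2F}{c^2}\,|t|.
\]
To pass to $\varrho_{[h_t]}-\varrho_K$ I use $|\varrho_{[h_t]}(v)-\varrho_K(v)|=\varrho_{[h_t]}(v)\varrho_K(v)\,|\varrho_{[h_t]}(v)^{-1}-\varrho_K(v)^{-1}|$, so it remains to bound the radial functions uniformly. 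Here the $C$-determinedness enters: since $\omega\subset\Omega_{C^\circ}=\Sn\cap\inn C^\circ$ is compact and $|\langle v,u\rangle|>0$ for every $u\in\inn C^\circ$ and $v\in C\setminus\{o\}$, the continuous map $(v,u)\mapsto|\langle v,u\rangle|$ is bounded below by some $a>0$ on the compact set $(\cl\Omega_C)\times\omega$. Consequently $\varrho_K(v)\le H/a$ and $\varrho_{[h_t]}(v)\le(H+\delta F)/a$ for all $v\in\Omega_C$ and $|t|\le\delta$, and combining these with the reciprocal estimate yields (b) with $M=(H/a)\big((H+\delta F)/a\big)(2F/c^2)$.

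The step I expect to be the main obstacle is precisely the uniform boundedness of the radial functions in (b): for a general pseudo-cone $\varrho_K$ is unbounded on $\Omega_C$, and it is the hypothesis $K\in\K(C,\omega)$ with compact $\omega\subset\Omega_{C^\circ}$ — equivalently, $C\setminus K$ bounded — that produces the positive lower bound $a$ for $|\langle v,u\rangle|$ and hence the required uniform bounds that make $M$ independent of $v$ and $t$. By contrast, part (a) is essentially a bookkeeping reduction to the already proved Lemma \ref{HLYZL4.3}, the only care being the uniform control of the remainder $o(t,u)$, which again uses that $\overline h_K$ is bounded away from $0$ on $\omega$.
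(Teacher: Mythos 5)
Your proof is correct, and while part (a) coincides with the paper's argument (the same reduction via the Taylor expansion of $\log(1+x)$ to a logarithmic family associated with $(C,\omega,\overline h_K|_\omega,f/\overline h_K|_\omega)$, followed by Lemma \ref{HLYZL4.3} and the chain rule), your part (b) takes a genuinely different route. The paper derives (b) \emph{from} (a): it bounds the difference quotient by the derivative plus $1$ for $|t|\le\delta_2$, then uses boundedness of $\varrho_K$ and of $f/\overline h_K|_\omega$ and the continuity of radial functions to pass from almost all $v$ to all $v$. That argument is short but leaves a uniformity point implicit: the statement in (a) is a pointwise limit for almost all $v$, so a priori the threshold $\delta_2$ for the ``$\le 1$'' estimate depends on $v$; the uniformity really comes from the sandwich inequality (\ref{V10}) in the proof of Lemma \ref{HLYZL4.3}, where the remainder $o(t,\cdot)$ is uniform on $\omega$, but this is not spelled out. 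Your direct approach avoids this entirely: starting from (\ref{HLYZ2.24a}), the elementary bound $|\min_u a(u)-\min_u b(u)|\le\max_u|a(u)-b(u)|$ gives a Lipschitz estimate for $1/\varrho_{[h_t]}(v)$ that is manifestly uniform in $v$ and valid for \emph{every} $v\in\Omega_C$, and your conversion to $\varrho$ itself correctly isolates where $C$-determinedness is used — the positive lower bound $a$ for $|\langle v,u\rangle|$ on the compact set $(\cl\Omega_C)\times\omega$ (note $u\in\inn C^\circ$ forces $\langle v,u\rangle<0$ strictly for $v\in C\setminus\{o\}$), yielding the uniform bounds $\varrho_K\le H/a$ and $\varrho_{[h_t]}\le (H+\delta F)/a$. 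What each approach buys: the paper's is shorter and recycles (a) (which it needs anyway), while yours is self-contained for (b), produces an explicit constant $M$, needs no upgrade from almost-all to all $v$, and is arguably the more robust argument; it is also closer in spirit to how the Lipschitz bound is obtained in \cite{HLYZ16}.
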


\begin{proof}
Since  $K\in\K(C,\omega)$, the absolute support function of $K$ is bounded away from $0$ on $\omega$, hence we can choose $\delta_1>0$ such that $h_t(u)>0$ for all $u\in\omega$ if $|t|\le \delta_1$. From the Taylor expansion of $\log(1+x)$ we get
$$ \log h_t = \log \overline h_K +\frac{tf}{\overline h_K} + o(t,\cdot)\quad\mbox{on }\omega,$$
so that $[h_t]$ is a logarithmic family of Wulff shapes associated with $(C,\omega,\overline h_K|_\omega,f/\overline h_K|_\omega)$. Therefore, Lemma \ref{HLYZL4.3} can be applied, which yields (with $h_0=\overline h_K|_\omega$ and hence $[h_0]=K$)
$$ \frac{\D \log \varrho_{[h_t]}(v)}{\D t}\Big |_{t=0} = \frac{f(\alpha_{[h_0]}(v))}{\overline h_K(\alpha_{[h_0]}(v))}= \frac{f(\alpha_K(v))}{\overline h_K(\alpha_K(v))}$$
for almost all $v\in\Omega_C$. From
$$ \frac{\D \log \varrho_{[h_t]}(v)}{\D t}\Big |_{t=0} = \frac{1}{\varrho_{[h_0]}(v)} \frac{\D \varrho_{[h_t]}(v)}{\D t}\Big |_{t=0}$$
we get
$$ \frac{\D \varrho_{[h_t]}(v)}{\D t}\Big |_{t=0} =  \frac{f(\alpha_K(v))}{\overline h_K(\alpha_K(v))} \varrho_K(v),$$
which is assertion (a), if $\delta\le \delta_1$.

From (a) we further get, for almost all $v\in\Omega_C$,
$$ \Big |\frac{\varrho_{[h_t]}(v) - \varrho_K(v)}{t}\Big | - \Big |\frac{\D \varrho_{[h_t]}(v)}{\D t} \Big|_{t=0}  \le 
\Big |\frac{\varrho_{[h_t]}(v) - \varrho_K(v)}{t}  -\frac{\D \varrho_{[h_t]}(v)}{\D t} \Big|_{t=0} \Big |\le 1,$$
say, for $|t| \le \delta_2$, if $\delta_2>0$ is suitably chosen. This gives
$$ \Big |\frac{\varrho_{[h_t]}(v) - \varrho_K(v)}{t}\Big | \le  \Big|\frac{\D \varrho_{[h_t]}(v)}{\D t} \Big|_{t=0}+1 = \varrho_K(v) \Big | \frac{\D \log \varrho_{[h_t]}(v)}{\D t}\Big|_{t=0} +1.$$
Here $\varrho_K$ is bounded since $K\in\K(C,\omega)$, and by Lemma \ref{HLYZL4.3}, 
$$ \Big | \frac{\D \log \varrho_{[h_t]}(v)}{\D t}\Big|_{t=0} = \frac{f}{\overline h_K|_\omega}(\alpha_K(v)),$$
where $f/\overline h_K|_\omega$ is bounded. This, together with the continuity of radial functions, proves assertion (b), for $\delta\le \delta_2$.
\end{proof}

\vspace{3mm}

Let $\Theta$ be according to Definition \ref{D1.1}, with $n-1<q<n$. For a $C$-pseudo-cone $K$, we recall that the measure $S_{n-1}^\Theta(K,\cdot)$ on $\Omega_{C^\circ}$ is defined as the image measure of the measure given by
$$ \beta\mapsto  \int_\beta \Theta(y) \,\Ha^{n-1}(\D y), \quad \beta\in\B(\partial_i K),$$
under the spherical image map, and that
$$ V_\Theta(K):= \int_{C\setminus K} \Theta(y) \,\Ha^n(\D y).$$
As shown in Lemmas \ref{L3.1} and \ref{L4.1}, the weighted surface area measure $S^\Theta_{n-1}(K,\cdot)$ and the weighted covolume $V_\Theta$ are finite.

The following adapts Lemma 2.7 of \cite{KL23} to $C$-full pseudo-cones (but we need only special densities $\Theta$ as considered here). 

\begin{lemma}\label{KL23L2.7}
Let $K\in\K(C,\omega)$, for some nonempty, compact set $\omega\subset\Omega_{C^\circ}$. Let $\Theta$, $S^\Theta_{n-1}(K,\cdot)$ and $V_\Theta$ be as above. Let $f:\omega\to\R$ be continuous, and let $[\overline h_K|_\omega+tf]$ be the Wulff shape associated with $(C,\omega, \overline h_K|_\omega+tf)$.  Then
$$ \lim_{t\to 0} \frac{V_\Theta([\overline h_K|_\omega+tf])-V_\Theta(K)}{t} = \int_\omega f(u)\,S_{n-1}^\Theta(K,\D u).$$
\end{lemma}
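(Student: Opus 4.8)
\emph{The plan.} Everything reduces to a one‑dimensional radial integral, after which I differentiate under the integral sign; the only delicate point is the domination in the limit passage.

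First I would rewrite the weighted covolume in spherical coordinates. Writing $y=rv$ with $v\in\Omega_C$ and using the homogeneity $\Theta(rv)=r^{-q}\Theta(v)$, the region $C\setminus K$ is, up to a set of directions of measure zero (those in $\partial C$), swept out by the segments $\{rv:0<r<\varrho_K(v)\}$, $v\in\Omega_C$. Hence
$$ V_\Theta(K)=\int_{\Omega_C}\Theta(v)\int_0^{\varrho_K(v)} r^{n-1-q}\,\D r\,\D v=\frac{1}{n-q}\int_{\Omega_C}\Theta(v)\varrho_K(v)^{n-q}\,\D v, $$
where the inner integral converges precisely because $q<n$ (so $n-1-q>-1$), and $n-q>0$. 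The same identity holds for $[h_t]$ with $\varrho_K$ replaced by $\varrho_{[h_t]}$, so
$$ \frac{V_\Theta([h_t])-V_\Theta(K)}{t}=\frac{1}{n-q}\int_{\Omega_C}\Theta(v)\,\frac{\varrho_{[h_t]}(v)^{n-q}-\varrho_K(v)^{n-q}}{t}\,\D v. $$

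Next I would compute the pointwise limit of the integrand. By Lemma \ref{KLL2.6}(a) and the chain rule, for almost all $v\in\Omega_C$,
$$ \frac{\D}{\D t}\varrho_{[h_t]}(v)^{n-q}\Big|_{t=0}=(n-q)\varrho_K(v)^{n-q-1}\,\frac{f(\alpha_K(v))}{\overline h_K(\alpha_K(v))}\varrho_K(v)=(n-q)\varrho_K(v)^{n-q}\frac{f(\alpha_K(v))}{\overline h_K(\alpha_K(v))}, $$
so the integrand tends a.e.\ to $\Theta(v)\varrho_K(v)^{n-q}f(\alpha_K(v))/\overline h_K(\alpha_K(v))$.

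The main obstacle is justifying the interchange of limit and integral, and this is where the two‑sided control of Lemma \ref{KLL2.6} and the restriction $n-1<q$ both enter. Since $n-q-1<0$, the mean value theorem applied to $x\mapsto x^{n-q}$ needs a uniform positive lower bound on the radial functions. I would obtain it as follows: for $y\in[h_t]$ and $u\in\omega$ one has $\|y\|\ge-\langle y,u\rangle\ge h_t(u)\ge\min_\omega h_t$, and $\min_\omega h_t\ge c_0>0$ uniformly for $|t|\le\delta$ (after shrinking $\delta$), because $\overline h_K$ is bounded away from $0$ on $\omega$ and $f$ is bounded; the same bound $\varrho_K(v)\ge c_0$ holds since $K\in\K(C,\omega)$. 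Thus $\varrho_{[h_t]}(v),\varrho_K(v)\ge c_0$, the intermediate value $\xi$ between them satisfies $\xi^{n-q-1}\le c_0^{n-q-1}$, and with Lemma \ref{KLL2.6}(b),
$$ \left|\frac{\varrho_{[h_t]}(v)^{n-q}-\varrho_K(v)^{n-q}}{t}\right|\le(n-q)c_0^{n-q-1}\frac{|\varrho_{[h_t]}(v)-\varrho_K(v)|}{|t|}\le(n-q)c_0^{n-q-1}M. $$
Because $\Theta$ is bounded on $\Omega_C$ (by (\ref{3.1}) one has $\Theta(v)\le c\langle v,\mathfrak v\rangle^{-q}$, and $\langle v,\mathfrak v\rangle$ is bounded away from $0$ on the compact set $\Sn\cap C$) and $\Omega_C$ has finite spherical measure, the integrand is dominated by an integrable function independent of $t$. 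The dominated convergence theorem then yields
$$ \lim_{t\to 0}\frac{V_\Theta([h_t])-V_\Theta(K)}{t}=\int_{\Omega_C}\Theta(v)\varrho_K(v)^{n-q}\frac{f(\alpha_K(v))}{\overline h_K(\alpha_K(v))}\,\D v. $$

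Finally I would identify this with $\int_\omega f(u)\,S^\Theta_{n-1}(K,\D u)$ via Lemma \ref{L3.2n} applied to $g=f$. Using $\Theta(r_K(v))=\varrho_K(v)^{-q}\Theta(v)$ and $\overline h_K(\alpha_K(v))=\varrho_K(v)|\langle v,\alpha_K(v)\rangle|$, the density appearing there simplifies to $\Theta(r_K(v))\varrho_K^{n-1}(v)/|\langle v,\alpha_K(v)\rangle|=\Theta(v)\varrho_K(v)^{n-q}/\overline h_K(\alpha_K(v))$; moreover $\alpha_K$ maps $\Omega_C\setminus\eta_K$ into $\omega$ (since $K\in\K(C,\omega)$) and $\eta_K$ is a null set, so $\alpha_K^{-1}(\omega)=\Omega_C$ up to measure zero. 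Hence the last display equals $\int_\omega f(u)\,S^\Theta_{n-1}(K,\D u)$, which is the assertion.
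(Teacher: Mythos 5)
Your proposal is correct and takes essentially the same route as the paper: spherical coordinates for $V_\Theta$, the a.e.\ pointwise derivative of $\varrho_{[h_t]}$ from Lemma \ref{KLL2.6}(a), the uniform Lipschitz bound from Lemma \ref{KLL2.6}(b) to dominate the difference quotients, dominated convergence, and finally the integral-transformation lemma (your Lemma \ref{L3.2n} is derived from the paper's Lemma \ref{L2a.1}, so the identifications coincide). The only cosmetic difference is that you use the homogeneity of $\Theta$ to evaluate the radial integral in closed form as $\frac{1}{n-q}\int_{\Omega_C}\Theta(v)\varrho_K(v)^{n-q}\,\D v$ and then dominate via the mean value theorem with the lower bound $\varrho\ge c_0$, whereas the paper keeps $F_t(v)=\int_0^{\varrho_{[h_t]}(v)}\Theta(rv)r^{n-1}\,\D r$ unevaluated (an argument that would also cover non-homogeneous weights); both are valid under the standing assumption $n-1<q<n$.
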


\begin{proof}
We write $h_t(u):= \overline h_K(u)+tf(u)$ for $u\in\omega$ and small enough $|t|$ and use spherical polar coordinates to obtain
$$ V_\Theta([h_t]) = \int_{\Omega_C}\int_0^{\varrho_{[h_t]}(v)} \Theta(rv) r^{n-1}\,\D r \, \D v = \int_{\Omega_C} F_t(v)\,\D v,$$
where
$$ F_t(v):= \int_0^{\varrho_{[h_t]}(v)} \Theta(rv)r^{n-1}\,\D r$$
for $v\in\Omega_C$. This gives
\begin{eqnarray}
\frac{F_t(v)-F_0(v)}{t} &=& \frac{1}{t} \int_{\varrho_K(v)}^{\varrho_{[h_t]}(v)} \Theta(rv)r^{n-1}\,\D r\label{7.2a}\\
&=& \frac{\varrho_{[h_t]}(v)-\varrho_K(v)}{t}\cdot\frac{1}{\varrho_{[h_t]}(v)-\varrho_K(v)} \int_{\varrho_K(v)}^{\varrho_{[h_t]}(v)} \Theta(rv)r^{n-1}\,\D r.\nonumber
\end{eqnarray}
As $t\to 0$, the first factor converges, by Lemma \ref{KLL2.6}, to $\varrho_K(v) f(\alpha_K(v))/\overline h_K(\alpha_K(v))$, for almost all $v\in\Omega_C$. The second factor converges to $\Theta(\varrho_K(v)v)\varrho^{n-1}_K(v)=\Theta(r_K(v))\varrho^{n-1}_K(v)$. Thus, we get
$$ \lim_{t\to 0} \frac{F_t(v)-F_0(v)}{t} = \Theta(r_K(v)) \frac{f(\alpha_K(v))}{\overline h_K(\alpha_K(v))}\varrho^n_K(v)$$
for almost all $v\in\Omega_C$.

By (\ref{7.2a}) and Lemma \ref{KLL2.6} we have
$$ \left|\frac{F_t(v)-F_0(v)}{t}\right| \le M'$$
with some constant $M'$ independent of $t$ (for sufficiently small $|t|$), hence the dominated convergence theorem  allows us to conclude that
\begin{eqnarray*}
\lim_{t\to 0} \frac{V_\Theta([h_t]) - V_\Theta(K)}{t} &=& \int_{\Omega_C} \lim_{t\to 0} \frac{F_t(v)-F_0(v)}{t}\,\D v\\
&=& \int_{\Omega_C} \Theta(r_K(v))\frac{f(\nu_K(r_K(v))}{\overline h_K(\alpha_K(v))}\varrho_K^n(v)\,\D v\\
&=& \int_{\partial_i K}f(\nu_K(y))\Theta(y)\,\Ha^{n-1}(\D y)\\
&=& \int_\omega f(u)\, S_{n-1}^\Theta(K,\D u).
\end{eqnarray*}
Here we have used Lemma \ref{L2a.1} and the definition of $S_{n-1}^\Theta(K,\cdot)$, together with the transformation formula for integrals. The final integral is only over $\omega$, since 
$$\Ha^{n-1}(\nu_K^{-1}(\Omega_C\setminus \omega))=0$$
for the set $K$ determined by $\omega$.
\end{proof}

\section{Proof of Theorem \ref{T1.1}}\label{sec7}

We assume that $\Theta$ is chosen according to Definition \ref{D1.1}, with $n-1<q<n$. Under this assumption, we have already proved in Lemmas \ref{L3.1} and \ref{L4.1} that $S_{n-1}^\Theta(K,\cdot)$ and $V_\Theta(K)$ are finite for every $C$-pseudo-cone $K$.

First we state two lemmas, which will later be needed.

\begin{lemma}\label{L7.1}
There is a constant $c$, depending only on $C$ and $\Theta$, such that every pseudo-cone $K\in ps(C)$ with $V_\Theta(K)=1$ satisfies $\overline h_{K}\le c$.
\end{lemma}

\begin{proof}
Let $r$ be the maximal radius of a ball with center $o$ having empty intersection with ${\rm int}\,K$. Since $rB^n\cap K\not=\emptyset$, every supporting hyperplane of $K$ with an outer normal vector $u\in\Omega_{C^\circ}$ intersects $rB^n$, hence $\overline h_{K}(u)\le r$. From $V_\Theta(K)=1$ it follows that $\Ha^n_\Theta(C\cap rB^n)<1$. Since $\Ha^n_\Theta$ is homogeneous of degree $n-q>0$, we get $1> \Ha^n_\Theta(C\cap rB^n) = r^{n-q}\Ha^n_\Theta(C\cap B^n)$, which gives the desired bound.
\end{proof}

For a pseudo-cone $K\in ps(C)$ and a nonempty compact set $\beta\subset\Omega_{C^\circ}$, we recall from (\ref{2.1}) that the set $K^{(\beta)}\in\K(C,\beta)$ was defined by
$$ K^{(\beta)}:= C\cap\bigcap_{u\in\beta} H^-(K,u).$$

\begin{lemma}\label{L7.2}
Let $K\in ps(C)$ and let $\omega,\beta\subset\Omega_{C^\circ}$ be nonempty compact sets with $\omega\subset{\rm int}\,\beta$. Then $\bm x_K(\omega)= \bm x_{K^{(\beta)}}(\omega)$.
\end{lemma}

\begin{proof}
The inclusion $\bm x_K(\omega)\subseteq \bm x_{K^{(\beta)}}(\omega)$ holds trivially. Suppose that $y\in\bm x_{K^{(\beta)}}(\omega)$, but $y\notin \bm x_K(\omega)$. Then $y\in H(K,u)$ for some $u\in\omega$, but $y\notin K$. There are unit vectors $u'$, arbitrarily close to $u$, such that some hyperplane with normal vector $u'$ strictly separates $y$ from $K$. Since $u\in\omega\subset{\rm int}\,\beta$, the vector $u'$ can be chosen in $\beta$, and then $y\notin K^{(\beta)}$ and thus $y\notin \bm x_{K^{(\beta)}}$, a contradiction.
\end{proof}

For the proof of Theorem 1, we can use a similar argumentation as in \cite{Sch18, Sch21}, but with modifications. First we assume that $\omega\subset\Omega_{C^\circ}$ is nonempty and compact. Let $\varphi$ be a nonzero, finite Borel measure on $\omega$. By $\C^+(\omega)$ we denote the set of continuous functions $f:\omega\to(0,\infty)$, with the topology induced by the maximum norm. For $f\in\C^+(\omega)$ we recall that $[f]$ denotes the Wulff shape associated with $(C,\omega,f)$. Then we define a functional $\Phi:\C^+(\omega) \to (0,\infty)$ by
$$ \Phi(f):= V_\Theta( [f])^{-1/(n-q)}\int_\omega f\,\D\varphi.$$
It follows from Lemmas \ref{L5.1} and \ref{L4.1} that the functional $\Phi$ is continuous. We show that it attains a maximum on the set
$$ \mathcal L':= \{\overline h_L|_\omega: L\in\K(C,\omega),\, V_\Theta( L)=1\}.$$
The condition $V_\Theta( L)=1$ can be satisfied, because  $L\in\K(C,\omega)$ implies that $ L\subset C^-(t)$ for some $t$ and then (\ref{4.2}) and the homogeneity of $V_\Theta$ can be applied.
By an extension of \cite[Lem. 8]{Sch18}, there is a number $t>0$ such that
$$ K\in\K(C,\omega) \wedge V_\Theta( K)=1 \Rightarrow C(t)\subset K.$$
In fact, the proof of that Lemma extends to $V_\Theta$, because (\ref{4.2}) holds. If now $L\in \K(C,\omega)$ is such that $\overline h_L|_\omega\in\mathcal L'$, then with $t$ as above (independent of $L$) we have
$$ \Phi(\overline h_L|_\omega) \le \int_\omega -h_{C(t)}\,\D \varphi <c$$
with a constant $c$ independent of $L$. Thus, $\sup\{\Phi(f):f\in\mathcal L'\}<\infty$. Precisely as in \cite{Sch18} (Proof of Theorem 3) one can now show that there exists a $C$-pseudo-cone $K_0$ such that $\overline h_{K_0}|_\omega\in\mathcal L'$ and $\Phi$ attains its maximum on $\mathcal L'$ at $\overline h_{K_0}|_\omega$ (the condition $o\notin K_0$ is obviously satisfied, since $V_\Theta(K_0)=1$).

Since $V_\Theta$ is homogeneous of degree $n-q$, the functional $\Phi$ is homogeneous of degree zero. It follows that the maximum of $\Phi$ on the set $\mathcal L:= \{\overline h_L|_\omega:L\in\K(C,\omega)\}$ is also attained at $\overline h_{K_0}|_\omega$.

For $K\in\K(C,\omega)$ it follows from the definition of the Wulff shape that $[\overline h_K|_\omega]=K$. Let $f\in \C^+(\omega)$. Since $[f]\in\K(C,\omega)$, we have $ [\overline h_{[f]}|_\omega]=[f]$, hence $V_\Theta([f]) = V_\Theta([\overline h_{[f]}|_\omega])$. Further, $f(u)\le \overline h_{[f]}(u)$ for $u\in\omega$. Therefore,
\begin{eqnarray*} 
\Phi(f) &=& V_\Theta([f])^{-1/(n-q)}\int_\omega f\,\D\varphi \le V_\Theta([\overline h_{[f]}|_\omega])^{-1/(n-q)}\int_\omega \overline h_{[f]}\,\D\varphi\\ 
&=& \Phi(\overline h_{[f]}|_\omega)\le \Phi(\overline h_{K_0}|_\omega).
\end{eqnarray*}
Thus, the maximum of $\Phi$ on $\C^+(\omega)$ is also attained at $\overline h_{K_0}|_\omega$.

Now let $f:\omega\to\R$ be continuous. For sufficiently small $|\tau|$, the function $\tau\mapsto \overline h_{K_0}|_\omega+\tau f$ belongs to $\C^+(\omega)$, hence the function
defined by $F(\tau):= \Phi(\overline h_{K_0}|_\omega+\tau f)$ has a maximum at $\tau=0$. By Lemma \ref{KL23L2.7} (and since we have $V_\Theta( K_0)=1$), we get
$$ F'(0) =-\frac{1}{n-q}\int_\omega f\,\D S_{n-1}^\Theta(K_0,\cdot)\cdot\int_\omega \overline h_{K_0}\,\D\varphi + \int_\omega f\,\D\varphi.$$
Since this is equal to $0$, setting
$$ \lambda:= \frac{1}{n-q} \int_\omega\overline h_{K_0}\,\D\varphi$$
we obtain
$$ \int_\omega f\,\D\varphi =\lambda \int_\omega f\,\D S_{n-1}^\Theta(K_0,\cdot).$$
Since this holds for all continuous functions $f$ on $\omega$, we deduce that
$$ \varphi = \lambda S_{n-1}^\Theta(K_0,\cdot) = S_{n-1}^\Theta(K,\cdot)$$
with $K:= \lambda^{\frac{1}{n-1-q}}K_0$, since $S_{n-1}^\Theta$  is homogeneous of degree $n-1-q$ in its first argument.

Before we continue, we introduce some notation. For a $C$-pseudo-cone $K$, we denote by $b(K)$ the distance of $K$ from the origin. For $u\in\Omega_{C^\circ}$, we denote by $\delta_C(u)$ the spherical distance of $u$ from the boundary of $\Omega_{C^\circ}$, and for $\alpha>0$ we write
$$ \omega(\alpha) := \{u\in\Omega_{C^\circ}: \delta_C(u)\ge\alpha\}.$$
This set is compact (possibly empty).

Now we assume, as in the formulation of Theorem \ref{T1.1}, that $\varphi$ is a nonzero, finite Borel measure on all of $\Omega_{C^\circ}$. We choose a number $\tau>0$ such that $\varphi(\omega(\tau))>0$. Then we choose a sequence $(\omega_j)_{j\in\N}$ of compact subsets of $\Omega_{C^\circ}$ with $\omega_1:= \omega(\tau)$, $\omega_j\subset {\rm int}\,\omega_{j+1}$ for $j\in\N$, and $\bigcup_{j\in\N}\omega_j=\Omega_{C^\circ}$. To each $j\in \N$ we define a measure $\varphi_j$ by $\varphi_j(\omega) := \varphi(\omega\cap \omega_j)$ for $\omega\in\B(\Omega_{C^\circ})$. Then for each $j\in \N$ there exists, as shown above, a $C$-pseudo-cone $K_j\in\K(C,\omega_j)$ satisfying 
$$ V_\Theta(K_j) =1\quad\mbox{and}\quad \varphi_j=\lambda_j S^\Theta_{n-1}(K_j,\cdot)$$
with
$$ \lambda_j := \frac{1}{n-q} \int_{\omega_j} \overline h_{K_j}\,\D\varphi_j = \frac{1}{n-q} \int_{\omega_j} \overline h_{K_j}\,\D\varphi,$$
so that the pseudo-cone
$$ L_j:= \lambda_j^{\frac{1}{n-1-q}}K_j\quad\mbox{satisfies}\quad S^\Theta_{n-1}(L_j,\cdot)=\varphi_j.$$

We have the intention to apply Lemma \ref{L2.1} to the sequence $(L_j)_{j\in\N}$, hence we must know that the distances $b(L_j)$ from the origin are bounded and bounded away from $0$. First we note that for the sequence $(K_j)_{j\in\N}$ it follows from Lemma \ref{L7.1} that $\overline h_{K_j}\le c$ with a constant $c$ independent of $j$. Since
$$\lambda_j\le \frac{1}{n-q} \int_{\omega_j} c\,\D\varphi \le \frac{c}{n-q}\varphi(\Omega_{C^\circ})<\infty,$$
it follows that
$$ \overline h_{L_j} \le c\cdot\left(\frac{c}{n-q}\varphi(\Omega_{C^\circ})\right)^{\frac{1}{n-1-q}}.$$
Hence, $(b(L_j))_{j\in\N}$ is bounded from above, and we can choose a number $t_1>0$ such that
$$ L_j\cap C^-(t_1) \not=\emptyset\quad\mbox{for all }j\in\N.$$
Further, we have $S_{n-1}(L_j,\omega(\tau))=\varphi(\omega(\tau)) =:s>0$, and it follows from \cite[Lem. 9]{Sch23} that $b(L_j)>b_0$ with some constant $b_0>0$ depending only on $C,\tau$ and $s$.

Now we can complete the proof essentially as in \cite{Sch21}, Proof of Theorem 1. As there, we choose (with $t_1$ as above) a real sequence $(t_k)_{k\in \N}$ with $t_k\uparrow \infty$ as $k\to\infty$. Then there exist a $C$-pseudo-cone $K$ (that $K$ is a $C$-pseudo-cone, is clear from the construction) and a subsequence $(\ell_i)_{i\in\N}$ of $\N$ such that
$$ \lim_{i\to\infty}(L_{\ell_i} \cap C^-(t_k))= K\cap C^-(t_k)\quad\mbox{for each }k\in\N.$$

We change the notation, replacing $\omega_{\ell_i}$ by $\omega_i$, $\varphi_{\ell_i}$ by $\varphi_i$, and $L_{\ell_i}$ by $L_i$, then
\begin{equation}\label{7.2}
\lim_{i\to\infty}(L_i \cap C^-(t_k))= K\cap C^-(t_k)\quad\mbox{for each }k\in\N.
\end{equation}
We fix $i$, choose a compact set $\beta\subset\Omega_{C^\circ}$ with $\omega_i\subset{\rm int}\,\beta$, and choose $k$ so large that $K^{(\beta)} \subset C^-(t_k)$. Then it follows from Lemma \ref{L7.2} that
$$ \bm x_K(\omega_i)= \bm x_{K^{(\beta)}}(\omega_i)$$
and from Lemma \ref{L2.2} that
$$ \lim_{i\to \infty} (L_i^{(\beta)}\cap C^-(t_k)) = K^{(\beta)}\cap C^-(t_k)).$$
Lemma \ref{L3.3} then shows that the restriction of $S^\Theta_{n-1}(L_i,\cdot)$ to $\omega_i$ converges weakly to the restriction of $S^\Theta_{n-1}(K^{(\beta)},\cdot)$ to $\omega_i$. Since the restriction of $S^\Theta_{n-1}(L_i,\cdot)$ to $\omega_i$ is equal to the restriction of $\varphi$ to $\omega_i$, it follows that for each Borel set $\omega'\subset \omega_i$ we have $S^\Theta_{n-1}(K,\omega') = S^\Theta_{n-1}(K^{(\beta)},\omega')= \varphi_i(\omega')= \varphi(\omega')$. 

Since $\bigcup_{i\in\N} \omega_i=\Omega_{C^\circ}$, we deduce that $S^\Theta_{n-1}(K,\cdot)=\varphi$. This completes the proof of Theorem \ref{T1.1}. \hfill$\Box$

\vspace{3mm}

We add a remark on the failing uniqueness, extending an observation made in \cite[Sect. 6]{Sch23}. Suppose that the measure $\varphi$ is concentrated in the one-pointed set $\omega:=\{\mathfrak{-v}\}$ and assigns the value $1$ to it. Since the function $\vartheta(t):= \int_{C(t)} \Theta(y)\,\Ha^{n-1}(\D y)$, $t>0$, is continuous and homogeneous of degree $n-1-q<0$, there is a number $t_0$ with $\vartheta(t_0)=1$. Then the pseudo-cone $C(t_0)+C$ is in $\K(C,\omega)$ and satisfies $S^\Theta_{n-1}(K,\cdot)=\varphi$. For arbitrary $t_1\in (0,t_0)$ we have $\vartheta(t_1)>\vartheta(t_0)=1$ and hence can choose an $(n-1)$-dimensional closed convex set $F\subset C(t_1)$ which satisfies $\int_F\Theta(y)\,\Ha^{n-1}(\D y) =1$ and is not a translate of $C(t_0)$. Then $L:= F+C$ is a pseudo-cone for which $S^\Theta_{n-1}(L,\cdot)$ is equal to $S^\Theta_{n-1}(K,\cdot)$ and has support contained in $\omega$, but $K$ is not a translate of $L$.

It remains open whether uniqueness holds if both, $K$ and $L$, belong to $\K(C,\omega)$.

\noindent Author's address:\\[2mm]
Rolf Schneider\\Mathematisches Institut, Albert--Ludwigs-Universit{\"a}t\\D-79104 Freiburg i.~Br., Germany\\E-mail: rolf.schneider@math.uni-freiburg.de

\end{document}